\theoremstyle{plain} 
\newtheorem{theorem}{\indent\sc Theorem}[section]
\newtheorem{lemma}[theorem]{\indent\sc Lemma}
\newtheorem{proposition}[theorem]{\indent\sc Proposition}
\theoremstyle{definition} 
\newtheorem{definition}[theorem]{\indent\sc Definition}
\newtheorem{remark}[theorem]{\indent\sc Remark}
\title{Foliations induced by metallic structures}
\author{Adara M. Blaga and Antonella Nannicini}
\date{}
\begin{document}

\maketitle

\markboth{{\small\it {\hspace{4cm} Foliations induced by metallic structures}}}{\small\it{Foliations induced by metallic structures
\hspace{4cm}}}

\footnote{ 
2010 \textit{Mathematics Subject Classification}.
53B05, 53C12, 53C15.
}
\footnote{ 
\textit{Key words and phrases}.
foliated manifold; metallic pseudo-Riemannian structure.
}

\begin{abstract}
We give necessary and sufficient conditions for the real distributions defined by a metallic pseudo-Riemannian structure to be integrable and geodesically invariant, in terms of associated tensor fields to the metallic structures and of adapted connections. In the integrable case, we prove a Chen-type inequality for these distributions and provide conditions for a metallic map to preserve these distributions. If the structure is metallic Norden, we describe the complex metallic distributions in the same spirit.
\end{abstract}

\bigskip

\section{Introduction}
Let $M$ be a smooth manifold and let $J$ be a $(1,1)$-tensor field on $M$. If $J^2=pJ+qI$, for some $p$ and $q$ real numbers, then $J$ is called \textit{a metallic structure on $M$} and $(M,J)$ is called \textit{a metallic manifold}. If $g$ is a pseudo-Riemannian metric on $M$ such that $J$ is $g$-symmetric, then $(J,g)$ is called a \textit{metallic pseudo-Riemannian structure} on $M$.

The aim of this paper is to consider the complementary distributions associated to a metallic pseudo-Riemannian structure and study their integrability and geodesically invariance in terms of associated tensor fields to the metallic structure and of adapted connections. In this sense, we study Schouten-van Kampen, Vr\u anceanu and Vidal connections, which seem to be the most important connections for the study of foliations of a pseudo-Riemannian manifold \cite {b:f}. Moreover, for these distributions, we prove a Chen-type inequality giving a relation between the squared norm of the mean curvature and the Chen first invariant. We also prove a leaf correspondence theorem between the leaves of two metallic pseudo-Riemannian manifolds when we have a metallic map between them with certain properties.

The sign of $p^2+4q$ is very important in the study of foliations induced by metallic structures because if it is positive, then $J$ has two real eigenvalues and if it is negative, $J$ has two complex eigenvalues. In the real case, $J$ can be related to almost product structures and in the complex case, to Norden structures on $(M,g)$.
In this paper we consider both of these cases and we describe some similarities and differences between them. In particular, in the complex case, we compute the $\bar{\partial}$-operator in terms of $J$. Moreover, we construct the metallic complex cohomology and homology groups.

Remark that some properties of metallic distributions have also been studied in \cite{ozyi}.

\section{Preliminaries}

\subsection{Metallic pseudo-Riemannian structures}


\begin{definition} \cite{bn1} Let $(M,g)$ be a pseudo-Riemannian manifold and let $J$ be a metallic structure on $M$. We say that the pair $(J,g)$ is \textit{a metallic pseudo-Riemannian structure on $M$} if $J$ is $g$-symmetric. In this case, $(M,J,g)$ is called \textit{a metallic pseudo-Riemannian manifold}. If $p^2+4q<0$, then $(J,g)$ is called \textit{a metallic Norden structure} and $(M,J,g)$ is called \textit{a metallic Norden manifold}.
\end{definition}

\begin{remark} Let $(M,g)$ be a pseudo-Riemannian manifold and let $J$ be a metallic structure on $M$ such that $J^2=pJ+qI$. If we require that $J$ is $g$-skew-symmetric, then we obtain that $p=0$. Namely, if we assume $g(JX,Y)=-g(X,JY)$, for any $X$, $Y\in C^{\infty}(TM)$, then we get
$g(JX,JY)=-g(X,J^2Y)=-pg(X,JY)-qg(X,Y)=pg(JX,Y)-qg(X,Y)$. On the other hand, $g(JX,JY)=-g(J^2X,Y)=-pg(JX,Y)-qg(X,Y)$, therefore $p=0.$ In particular, for $p\neq 0$, it is not possible to define the concept of metallic Hermitian structure.
\end{remark}

\begin{definition} \cite{bn1} i) A linear connection $\nabla$ on $M$ is called $J$-{\it connection} if $J$ is covariantly constant with respect to $\nabla$,
i.e. $\nabla J=0$.

ii) A metallic pseudo-Riemannian manifold $(M,J,g)$ such that the Levi-Civita connection $\nabla$ with respect to $g$ is a $J$-connection is called \textit{a locally metallic pseudo-Riemannian manifold}.
\end{definition}

\subsection{Associated tensors to a metallic pseudo-Riemannian structure}

For a metallic pseudo-Riemannian structure $(J,g)$ on the smooth manifold $M$ with $\nabla$ the Levi-Civita connection of $g$, we in\-tro\-duce some tensor fields \cite{m} used to characterize the properties of the metallic distributions defined by $J$:
\begin{enumerate}
  \item \textit{the $J$-bracket}
  $$[X,Y]_J:=[JX,Y]+[X,JY]-J([X,Y]),$$
where $[\cdot,\cdot]$ is the Lie bracket,
$[X,Y]=\nabla_XY-\nabla_YX$
  \item \textit{the Nijenhuis tensor associated to $J$}
  $$N_J(X,Y):=J([X,Y]_J)-[JX,JY]$$
  \item \textit{the Jordan bracket associated to $J$}
  $$\{X,Y\}_J:=\{JX,Y\}+\{X,JY\}-J(\{X,Y\}),$$
where $\{\cdot,\cdot\}$ is the Jordan bracket,
$\{X,Y\}=\nabla_XY+\nabla_YX$
  \item \textit{the Jordan tensor associated to $J$}
  $$M_J(X,Y):=J(\{X,Y\}_J)-\{JX,JY\}$$
  \item \textit{the deformation tensor associated to $J$}
  $$H_J(X,Y):=(J\circ \nabla_XJ-\nabla_{JX}J)(Y)$$
which satisfies $2H_J=N_J+M_J$.
\end{enumerate}

\begin{remark} The $J$-bracket and the associated Nijenhuis tensor can be defined for any $(1,1)$-tensor field on a smooth manifold $M$, the Jordan bracket, the associated Jordan tensor and the deformation tensor can be defined for $(1,1)$-tensor fields on a pseudo-Riemannian manifold $(M,g)$.
\end{remark}

Assume that $J$ satisfies $J^2=pJ+qI$ with $p^2+4q>0$, denote by $\sigma_{\pm}:=\frac{p\pm\sqrt{p^2+4q}}{2}$ and consider the projection operators $\mathcal{P}$ and $\mathcal{P}'$ \cite{c}:
$$
\mathcal{P}:=-\frac{1}{\sqrt{p^2+4q}}J+\frac{\sigma_{+}}{\sqrt{p^2+4q}}I, \ \
\mathcal{P}':=\frac{1}{\sqrt{p^2+4q}}J-\frac{\sigma_{-}}{\sqrt{p^2+4q}}I
$$
satisfying
$$\mathcal{P}^2=\mathcal{P}, \ \ \mathcal{P}'^2=\mathcal{P}', \ \ \mathcal{P}+\mathcal{P}'=I, \ \ \mathcal{P} \circ \mathcal{P}'=0, \ \ \mathcal{P}' \circ\mathcal{P}=0.$$

From a direct computation, we get the following:
\begin{proposition}\label{p1}
For the two projection operators $\mathcal{P}$ and $\mathcal{P}'$:
\begin{enumerate}
  \item $N_\mathcal{P}=N_{\mathcal{P}'}=\frac{1}{p^2+4q}N_J$;
  \item $M_\mathcal{P}=M_{\mathcal{P}'}=\frac{1}{p^2+4q}M_J$;
  \item $H_\mathcal{P}=H_{\mathcal{P}'}=\frac{1}{p^2+4q}H_J$.
\end{enumerate}
\end{proposition}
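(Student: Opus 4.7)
The plan is to exploit the fact that both $\mathcal{P}$ and $\mathcal{P}'$ are affine combinations of $J$ and $I$, and to prove a one-shot lemma: for any $(1,1)$-tensor of the form $T=aJ+bI$ one has $N_T=a^{2}N_J$, $M_T=a^{2}M_J$, and $H_T=a^{2}H_J$. Once this lemma is in hand, the proposition follows immediately since the definitions of $\mathcal{P}$ and $\mathcal{P}'$ give $a=\mp\frac{1}{\sqrt{p^{2}+4q}}$, and in both cases $a^{2}=\frac{1}{p^{2}+4q}$.

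To prove the lemma I would first compute the $T$-bracket. Writing $[TX,Y]=a[JX,Y]+b[X,Y]$ and similarly for the other term, one gets $[X,Y]_T=a[X,Y]_J+b[X,Y]$. Expanding $N_T(X,Y)=T([X,Y]_T)-[TX,TY]$ then produces four groups of terms. The $a^{2}$-piece assembles into $a^{2}(J([X,Y]_J)-[JX,JY])=a^{2}N_J$; the $b^{2}$-piece is $b^{2}[X,Y]-b^{2}[X,Y]=0$; and the mixed $ab$-piece equals
\[
ab\bigl([X,Y]_J+J[X,Y]-[JX,Y]-[X,JY]\bigr),
\]
which vanishes identically by the very definition of $[X,Y]_J$. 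Exactly the same computation with the Jordan bracket $\{\cdot,\cdot\}$ in place of $[\cdot,\cdot]$ gives $M_T=a^{2}M_J$, since the definitions of $\{X,Y\}_T$ and $M_T$ are structurally identical to those of $[X,Y]_T$ and $N_T$.

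For $H_T$ the same idea works even more cleanly once one observes that $\nabla_X T=a\nabla_X J$; this follows because $\nabla I=0$, so the $b$-summand of $T$ contributes nothing to $\nabla T$. Then
\[
(T\circ\nabla_X T)(Y)=a^{2}J(\nabla_X J)(Y)+ab(\nabla_X J)(Y),
\]
while $(\nabla_{TX}T)(Y)=a(\nabla_{aJX+bX}J)(Y)=a^{2}(\nabla_{JX}J)(Y)+ab(\nabla_X J)(Y)$, and subtracting gives $H_T=a^{2}H_J$.

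There is no serious obstacle in this proof; it is a bookkeeping exercise and the only thing one has to notice is the cancellation of the mixed $ab$-terms, which is essentially the tautology encoded in the definitions of the $J$-bracket and the Jordan bracket. The payoff of packaging the computation as the lemma above is that $\mathcal{P}$ and $\mathcal{P}'$ are treated on equal footing: the sign of $a$ is irrelevant, only $a^{2}$ enters, which explains at a structural level why the three identities hold simultaneously for both projectors with the same factor $\frac{1}{p^{2}+4q}$.
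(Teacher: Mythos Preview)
Your proof is correct and carries out precisely the ``direct computation'' the paper invokes without giving details. Your packaging via the lemma $N_{aJ+bI}=a^{2}N_J$ (and likewise for $M$ and $H$) is a clean way to organize the computation and makes transparent why $\mathcal{P}$ and $\mathcal{P}'$ yield the same constant; the paper does not isolate this lemma explicitly, but the underlying arithmetic is the same.
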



Consider now \textit{the deformation tensors} $H$ and $H'$:
$$
H(X,Y):=\mathcal{P}'(\nabla_{\mathcal{P}X}\mathcal{P}Y)=\mathcal{P}'((\nabla_{\mathcal{P}X}\mathcal{P})Y), \ \ H'(X,Y):=\mathcal{P}(\nabla_{\mathcal{P}'X}\mathcal{P}'Y)=\mathcal{P}((\nabla_{\mathcal{P}'X}\mathcal{P}')Y)
$$
\textit{the twisting tensors} $L$ and $L'$:
$$
L(X,Y):=\frac{1}{2}[H(X,Y)-H(Y,X)], \ \ L'(X,Y):=\frac{1}{2}[H'(X,Y)-H'(Y,X)]
$$
and \textit{the extrinsic curvature tensors} $K$ and $K'$:
$$
K(X,Y):=\frac{1}{2}[H(X,Y)+H(Y,X)], \ \ K'(X,Y):=\frac{1}{2}[H'(X,Y)+H'(Y,X)],
$$
for any $X$, $Y\in C^{\infty}(TM)$.

By a direct computation we obtain:
$$H(X,Y)=\frac{1}{(p^2+4q)\sqrt{p^2+4q}}[J(\nabla_{JX}JY)-\sigma_{+}J(\nabla_{X}JY)-\sigma_{+}J(\nabla_{JX}Y)+\sigma_{+}^2J(\nabla_{X}Y)-$$$$-
\sigma_{-}\nabla_{JX}JY-q\nabla_{X}JY-q\nabla_{JX}Y+q\sigma_{+}\nabla_XY]=$$
$$=\frac{1}{(p^2+4q)\sqrt{p^2+4q}}[J(\nabla_{JX}J)-\sigma_{+}J(\nabla_{X}J)-\sigma_{-}(\nabla_{JX}J)-q(\nabla_{X}J)](Y)$$

$$H'(X,Y)=-\frac{1}{(p^2+4q)\sqrt{p^2+4q}}[J(\nabla_{JX}JY)-\sigma_{-}J(\nabla_{X}JY)-\sigma_{-}J(\nabla_{JX}Y)+\sigma_{-}^2J(\nabla_{X}Y)-$$$$-
\sigma_{+}\nabla_{JX}JY-q\nabla_{X}JY-q\nabla_{JX}Y+q\sigma_{-}\nabla_XY]=$$
$$=-\frac{1}{(p^2+4q)\sqrt{p^2+4q}}[J(\nabla_{JX}J)-\sigma_{-}J(\nabla_{X}J)-\sigma_{+}(\nabla_{JX}J)-q(\nabla_{X}J)](Y).$$

In particular, we get:
$$H(X,Y)+H'(X,Y)=\frac{1}{(p^2+4q)\sqrt{p^2+4q}}(-\sigma_{+}+\sigma_{-})[J(\nabla_{X}J)-(\nabla_{JX}J)](Y)=$$
$$=\frac{1}{p^2+4q}H_J(X,Y).$$

Moreover:
$$L=\frac{1}{2(p^2+4q)\sqrt{p^2+4q}}(\sigma_{-}N_J-J\circ N_J), \ \ L'=-\frac{1}{2(p^2+4q)\sqrt{p^2+4q}}(\sigma_{+}N_J-J\circ N_J),$$
$$K=\frac{1}{2(p^2+4q)\sqrt{p^2+4q}}(\sigma_{-}M_J-J\circ M_J), \ \ K'=-\frac{1}{2(p^2+4q)\sqrt{p^2+4q}}(\sigma_{+}M_J-J\circ M_J).$$

\section{Metallic distributions}

Let $(M,J,g)$ be a metallic pseudo-Riemannian manifold such that $J^2=pJ+qI$ with $p^2+4q>0$.
Define the complementary distributions:
\begin{equation}\label{d}
\mathcal{D}:=\ker \mathcal{P}', \ \
\mathcal{D}':=\ker \mathcal{P}
\end{equation}
which we shall call \textit{the metallic distributions} defined by the metallic structure $J$.

\begin{remark}
The distributions $\mathcal{D}$ and $\mathcal{D}'$ are $J$-invariant, and, if  $q\neq 0$, then $\mathcal{D}$ and $\mathcal{D}'$ are also $g$-orthogonal.
\end{remark}


\begin{definition}
We say that a distribution $\mathcal{D}\subset TM$ on a smooth manifold $M$ is called

i) \textit{involutive} if $X$, $Y\in \Gamma(\mathcal{D})$ implies $[X,Y]\in \Gamma(\mathcal{D})$;

ii) \textit{integrable} if for any $x\in M$, there exists a submanifold $N_x$ which admits $\mathcal{D}|_{N_x}$ as tangent bundle.
\end{definition}

According to Frobenius theorem, a distribution $\mathcal{D}$ on $M$ is involutive if and only if it is integrable. In this case, it defines a foliation whose leaves are the maximal connected submanifolds $N_x$ of $M$ which admit $\mathcal{D}|_{N_x}$ as tangent bundle.

\begin{definition}
We say that the metallic pseudo-Riemannian manifold $(M,J,g)$ is \textit{doubly foliated} if both of the distributions
$\mathcal{D}$ and $\mathcal{D}'$ given by (\ref{d}) are integrable and \textit{singly foliated}
if only one of them is integrable.
\end{definition}

\begin{remark}\label{r3}
The distribution $\mathcal{D}$ (resp. $\mathcal{D}'$) given by (\ref{d}) is integrable if and only if $(\nabla_XJ)Y-(\nabla_YJ)X=0$, for any
$X$, $Y\in \Gamma (\mathcal{D})$ (resp. $X$, $Y\in \Gamma (\mathcal{D}')$), with $\nabla$ a torsion-free linear connection on $M$. Indeed, for $X$, $Y\in \Gamma (\mathcal{D})$ we have $JX=\sigma_{-}X$, $JY=\sigma_{-}Y$ and $J(\nabla_XY-\nabla_YX)=-(\nabla_XJ)Y+(\nabla_YJ)X+\sigma_{-}(\nabla_XY-\nabla_YX)$ which implies that $[X,Y]\in \Gamma (\mathcal{D})$ if and only if $(\nabla_XJ)Y-(\nabla_YJ)X=0$.

In particular, in a locally metallic pseudo-Riemannian manifold, the two distributions $\mathcal{D}$ and $\mathcal{D}'$ given by (\ref{d}) are both integrable.
\end{remark}

\begin{proposition}
If $(M,J,g)$ is a metallic pseudo-Riemannian manifold, then the distribution $\mathcal{D}$ is integrable if and only if:
$$J\circ N_J(X,Y)=\sigma_{-}N_J(X,Y), \ \ \textit{for any} \ \ X,Y\in C^{\infty}(TM),$$
respectively, $\mathcal{D}'$ is integrable if and only if:
$$J\circ N_J(X,Y)=\sigma_{+}N_J(X,Y), \ \ \textit{for any} \ \ X,Y\in C^{\infty}(TM).$$
In particular, both $\mathcal{D}$ and $\mathcal{D}'$ are integrable if and only if $N_J=0$.
\end{proposition}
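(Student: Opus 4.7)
The plan is to reduce everything to a direct computation of $N_J$ on the eigenspaces of $J$. A quick check from the formulas for $\mathcal{P}$ and $\mathcal{P}'$ gives $J=\sigma_-\mathcal{P}+\sigma_+\mathcal{P}'$, so $\mathcal{D}=\ker\mathcal{P}'$ is the $\sigma_-$-eigenspace of $J$ and $\mathcal{D}'=\ker\mathcal{P}$ is the $\sigma_+$-eigenspace, with $\sigma_-\neq\sigma_+$ since $p^2+4q>0$. The condition $J\circ N_J(X,Y)=\sigma_- N_J(X,Y)$ is therefore equivalent to $N_J(X,Y)\in\Gamma(\mathcal{D})$ for all $X,Y$, and analogously for $\sigma_+$ and $\mathcal{D}'$.

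Decompose $X=X_1+X_2$, $Y=Y_1+Y_2$ with $X_1,Y_1\in\Gamma(\mathcal{D})$ and $X_2,Y_2\in\Gamma(\mathcal{D}')$, and evaluate $N_J$ on each type of pair. For $X_1,Y_1\in\Gamma(\mathcal{D})$, substituting $JX_1=\sigma_-X_1$ and $JY_1=\sigma_-Y_1$ in the definition of $[\cdot,\cdot]_J$ reduces each bracket to a scalar multiple of $[X_1,Y_1]$; using $J^2=pJ+qI$, the identities $\sigma_++\sigma_-=p$ and $\sigma_+\sigma_-=-q$, and the rewriting $J-\sigma_-I=(\sigma_+-\sigma_-)\mathcal{P}'$, a short calculation produces
$$N_J(X_1,Y_1)=-(p^2+4q)\,\mathcal{P}'[X_1,Y_1],$$
so $N_J(X_1,Y_1)\in\Gamma(\mathcal{D}')$ and it vanishes iff $[X_1,Y_1]\in\Gamma(\mathcal{D})$. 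Symmetrically $N_J(X_2,Y_2)=(p^2+4q)\,\mathcal{P}[X_2,Y_2]\in\Gamma(\mathcal{D})$, vanishing iff $[X_2,Y_2]\in\Gamma(\mathcal{D}')$. For the cross pairs, the scalar multipliers combine via $\sigma_+\sigma_-=-q$ to force $N_J(X_1,Y_2)=N_J(X_2,Y_1)=0$.

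Putting this together, bilinearity gives $N_J(X,Y)=N_J(X_1,Y_1)+N_J(X_2,Y_2)$, with the two summands lying in the complementary distributions $\mathcal{D}'$ and $\mathcal{D}$. Hence $N_J(X,Y)\in\Gamma(\mathcal{D})$ for every $X,Y$ iff the $\mathcal{D}'$-component $N_J(X_1,Y_1)$ vanishes for all $X_1,Y_1\in\Gamma(\mathcal{D})$ iff $\mathcal{P}'[X_1,Y_1]=0$ on $\Gamma(\mathcal{D})$ iff $\mathcal{D}$ is involutive, hence integrable by Frobenius; this is the first equivalence. The second follows by the symmetric argument, and the simultaneous integrability of both distributions forces both $N_J(X_1,Y_1)$ and $N_J(X_2,Y_2)$ to vanish identically, giving $N_J=0$ since the mixed terms already vanish. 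The only delicate step is the sign-tracking in the eigenspace computation; once the decomposition $TM=\mathcal{D}\oplus\mathcal{D}'$ is in place, everything reduces to routine linear algebra.
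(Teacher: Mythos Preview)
Your argument is correct. One harmless slip: by the same computation that gives $N_J(X_1,Y_1)=-(p^2+4q)\mathcal{P}'[X_1,Y_1]$, the symmetric case yields $N_J(X_2,Y_2)=-(p^2+4q)\mathcal{P}[X_2,Y_2]$, not $+(p^2+4q)\mathcal{P}[X_2,Y_2]$; the sign does not affect anything downstream.

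Your route differs from the paper's. The paper does not decompose $X,Y$ into eigenspace components; instead it uses the projection identity $\mathcal{P}'\circ\mathcal{P}=0$ to write $\mathcal{P}'([\mathcal{P}X,\mathcal{P}Y])=-\mathcal{P}'(N_{\mathcal{P}}(X,Y))$ directly, then invokes Proposition~\ref{p1} ($N_{\mathcal{P}}=\tfrac{1}{p^2+4q}N_J$) and the formula for $\mathcal{P}'$ to read off $\mathcal{P}'N_J=\tfrac{1}{\sqrt{p^2+4q}}(J\circ N_J-\sigma_-N_J)$. This is shorter because the computational work has already been packaged into Proposition~\ref{p1}. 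Your approach is more self-contained: by computing $N_J$ on each type of pair you simultaneously reprove the content of that proposition and make the structural statement $N_J(\Gamma(\mathcal{D}),\Gamma(\mathcal{D}))\subset\Gamma(\mathcal{D}')$, $N_J(\Gamma(\mathcal{D}'),\Gamma(\mathcal{D}'))\subset\Gamma(\mathcal{D})$, $N_J|_{\mathcal{D}\times\mathcal{D}'}=0$ explicit, which also makes the ``in particular'' clause about $N_J=0$ transparent without a separate argument.
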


\begin{proof}
The distribution $\mathcal{D}$ is integrable if and only if
$$\mathcal{P}'([\mathcal{P}X,\mathcal{P}Y])=0,$$ for any $X$, $Y\in C^{\infty}(TM)$. Therefore, from a direct computation and using Proposition \ref{p1}, we obtain that a necessary and sufficient condition for
$\mathcal{D}$ to be integrable is:
$$0=\mathcal{P}'([\mathcal{P}X,\mathcal{P}Y])=-\mathcal{P}'(N_{\mathcal{P}}(X,Y))=-\frac{1}{p^2+4q}\mathcal{P}'(N_J(X,Y))=$$$$=
-\frac{1}{(p^2+4q)\sqrt{p^2+4q}}[J\circ N_J(X,Y)-\sigma_{-}N_J(X,Y)].$$
\end{proof}


\begin{definition}
Given a linear connection $\nabla$ on a smooth manifold $M$, we say that a distribution $\mathcal{D}\subset TM$ is $\nabla $-\textit{geodesically invariant}
if $X$, $Y\in \Gamma (\mathcal{D})$ implies $\nabla_XY+\nabla_YX\in \Gamma (\mathcal{D})$.

In particular, if $\nabla$ is the Levi-Civita of the pseudo-Riemannian manifold $(M,g)$, then $\mathcal{D}$ is \textit{geodesically invariant}.
\end{definition}

Remark that the above condition is equivalent to the following: the distribution $\mathcal{D}$ is $\nabla $-geodesically invariant if $X\in \Gamma (\mathcal{D})$ implies $\nabla_XX\in \Gamma (\mathcal{D})$.

\begin{remark}\label{r1}
For a linear connection $\nabla$ on $M$, the distribution $\mathcal{D}$ (resp. $\mathcal{D}'$) given by (\ref{d}) is $\nabla$-geodesically invariant if and only if $(\nabla_XJ)Y+(\nabla_YJ)X=0$, for any
$X$, $Y\in \Gamma (\mathcal{D})$ (resp. $X$, $Y\in \Gamma (\mathcal{D}')$). Indeed, for $X$, $Y\in \Gamma (\mathcal{D})$ we have $JX=\sigma_{-}X$, $JY=\sigma_{-}Y$ and $J(\nabla_XY+\nabla_YX)=-(\nabla_XJ)Y-(\nabla_YJ)X+\sigma_{-}(\nabla_XY+\nabla_YX)$ which implies that $\nabla_XY+\nabla_YX\in \Gamma (\mathcal{D})$ if and only if $(\nabla_XJ)Y+(\nabla_YJ)X=0$.

In particular, for any $J$-connection $\nabla$, the distributions $\mathcal{D}$ and $\mathcal{D}'$ are $\nabla $-geodesically invariant.
\end{remark}





\begin{proposition}
If $(M,J,g)$ is a metallic pseudo-Riemannian manifold, then the distribution $\mathcal{D}$ is geodesically invariant if and only if:
$$J\circ M_J(X,Y)=\sigma_{-}M_J(X,Y), \ \ \textit{for any} \ \ X,Y\in C^{\infty}(TM),$$
respectively, $\mathcal{D}'$ is geodesically invariant if and only if:
$$J\circ M_J(X,Y)=\sigma_{+}M_J(X,Y), \ \ \textit{for any} \ \ X,Y\in C^{\infty}(TM).$$
In particular, both $\mathcal{D}$ and $\mathcal{D}'$ are geodesically invariant if and only if $M_J=0$.
\end{proposition}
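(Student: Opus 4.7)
The plan is to mirror the argument just given for integrability, replacing the Lie bracket by the Jordan bracket and the Nijenhuis tensor by the Jordan tensor. The pattern is essentially identical because the projectors $\mathcal{P}$ and $\mathcal{P}'$ have the same symmetric role in both settings, and Proposition \ref{p1} already provides the bridge between $M_{\mathcal{P}}$ and $M_J$.

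First I would rewrite geodesic invariance of $\mathcal{D}$ as a condition on the projectors. By Remark \ref{r1}, $\mathcal{D}$ is geodesically invariant iff $\nabla_XY+\nabla_YX\in\Gamma(\mathcal{D})$ for all $X,Y\in\Gamma(\mathcal{D})$, i.e., iff $\mathcal{P}'(\{X,Y\})=0$ whenever $\mathcal{P}X=X$ and $\mathcal{P}Y=Y$. Equivalently, the invariance is characterized by
$$\mathcal{P}'(\{\mathcal{P}X,\mathcal{P}Y\})=0 \quad \text{for all } X,Y\in C^{\infty}(TM).$$

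Next I would use the defining identity $M_{\mathcal{P}}(X,Y)=\mathcal{P}(\{X,Y\}_{\mathcal{P}})-\{\mathcal{P}X,\mathcal{P}Y\}$ to rewrite $\{\mathcal{P}X,\mathcal{P}Y\}=\mathcal{P}(\{X,Y\}_{\mathcal{P}})-M_{\mathcal{P}}(X,Y)$. Applying $\mathcal{P}'$ and using $\mathcal{P}'\circ\mathcal{P}=0$ gives
$$\mathcal{P}'(\{\mathcal{P}X,\mathcal{P}Y\})=-\mathcal{P}'(M_{\mathcal{P}}(X,Y))=-\tfrac{1}{p^2+4q}\,\mathcal{P}'(M_J(X,Y))$$
by Proposition \ref{p1}(2). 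Substituting the expression $\mathcal{P}'=\tfrac{1}{\sqrt{p^2+4q}}(J-\sigma_{-}I)$, this becomes $-\tfrac{1}{(p^2+4q)\sqrt{p^2+4q}}[J\circ M_J(X,Y)-\sigma_{-}M_J(X,Y)]$, so vanishing of this expression is exactly the stated condition $J\circ M_J(X,Y)=\sigma_{-}M_J(X,Y)$.

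The $\mathcal{D}'$ case is handled by the same computation with the roles of $\mathcal{P}$ and $\mathcal{P}'$ swapped, producing the eigenvalue $\sigma_{+}$ instead. For the final claim, if both identities hold simultaneously then $(\sigma_{+}-\sigma_{-})M_J=0$, and since $p^2+4q>0$ gives $\sigma_{+}\neq\sigma_{-}$, we conclude $M_J=0$; the converse is immediate. There is no real obstacle here beyond making the bookkeeping of the $\mathcal{P}\circ\mathcal{P}'=0$ cancellations correct — everything else reduces to rewriting in terms of $J$, $\sigma_{\pm}$ and invoking Proposition \ref{p1}.
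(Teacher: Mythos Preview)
Your proposal is correct and follows exactly the same route as the paper: reformulate geodesic invariance as $\mathcal{P}'(\{\mathcal{P}X,\mathcal{P}Y\})=0$, reduce this to $-\mathcal{P}'(M_{\mathcal{P}}(X,Y))$ via the definition of $M_{\mathcal{P}}$ and $\mathcal{P}'\circ\mathcal{P}=0$, then invoke Proposition~\ref{p1} and the explicit form of $\mathcal{P}'$ to land on the eigenvalue condition. Your write-up is in fact more explicit than the paper's, which compresses the middle step into ``from a direct computation,'' and you also spell out the $\sigma_{+}\neq\sigma_{-}$ argument for the final claim that the paper leaves implicit.
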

\begin{proof}
The distribution $\mathcal{D}$ is geodesically invariant if and only if
$$\mathcal{P}'(\{\mathcal{P}X,\mathcal{P}Y\})=0,$$ for any $X$, $Y\in C^{\infty}(TM)$. Therefore, from a direct computation and using Proposition \ref{p1}, we obtain that a necessary and sufficient condition for
$\mathcal{D}$ to be geodesically invariant is:
$$0=\mathcal{P}'(\{\mathcal{P}X,\mathcal{P}Y\})=-\mathcal{P}'(M_{\mathcal{P}}(X,Y))=-\frac{1}{p^2+4q}\mathcal{P}'(M_J(X,Y))=$$$$=
-\frac{1}{(p^2+4q)\sqrt{p^2+4q}}[J\circ M_J(X,Y)-\sigma_{-}M_J(X,Y)].$$
\end{proof}

\begin{remark}
$J_p:=\mathcal{P}-\mathcal{P}'$ is an almost product structure on $M$ and
$$J_pX=-\frac{1}{\sqrt{p^2+4q}}(2J-pI)X,$$
for any $X\in C^{\infty}(TM)$.
\end{remark}

Direct computations provide the following relationship between $J$ and $J_p$-brackets, $J$ and $J_p$ Nijenhuis tensors, Jordan bracket and Jordan tensors of the two structures. Precisely, we have the following:
\begin{proposition}
$$[X,Y]_J=-\frac{{\sqrt{p^2+4q}}}{2}[X,Y]_{J_p}+\frac{p}{2}[X,Y]$$
$$N_J(X,Y)=\frac{p^2+4q}{4}N_{J_p}(X,Y)$$
$$\{X,Y\}_J=-\frac{{\sqrt{p^2+4q}}}{2}\{X,Y\}_{J_p}+\frac{p}{2}\{X,Y\}$$
$$M_J(X,Y)=\frac{p^2+4q}{4}M_{J_p}(X,Y).$$

In particular, the deformation tensors are related as follows:
$$H_J(X,Y)=\frac{p^2+4q}{4}H_{J_p}(X,Y).$$
\end{proposition}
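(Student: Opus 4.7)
The plan is to invert the definition $J_p X = -\frac{1}{\sqrt{p^2+4q}}(2J - pI)X$ to obtain the linear relation $J = \frac{p}{2}I - \frac{\sqrt{p^2+4q}}{2}J_p$, and then to substitute this into each of the five definitions. The key structural observation is that the assignment $J \mapsto [\cdot,\cdot]_J$ is \emph{linear} in the $(1,1)$-tensor argument (and likewise for $J \mapsto \{\cdot,\cdot\}_J$), while $N_J$ and $M_J$ are \emph{quadratic} in $J$ with a convenient cross-term cancellation.

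Writing $J = \lambda I + \mu J_p$ with $\lambda = p/2$ and $\mu = -\sqrt{p^2+4q}/2$, the bilinearity of the Lie bracket gives directly $[X,Y]_J = \lambda\, [X,Y]_I + \mu\, [X,Y]_{J_p}$; since $[X,Y]_I = [X,Y]$, this is identity (1). The same manipulation with the Jordan bracket replacing the Lie bracket delivers identity (3). For the Nijenhuis tensor I expand $N_J(X,Y) = J([X,Y]_J) - [JX,JY]$ using (1) together with $J = \lambda I + \mu J_p$. The pure $\lambda^2$-contributions on the two sides coincide and cancel; the two $\lambda\mu$ cross-terms combine to $\lambda\mu\bigl([X,Y]_{J_p} + J_p[X,Y] - [J_p X, Y] - [X, J_p Y]\bigr)$, which vanishes by the very definition of $[\cdot,\cdot]_{J_p}$; the surviving $\mu^2$-term is exactly $\mu^2 N_{J_p}(X,Y) = \frac{p^2+4q}{4} N_{J_p}(X,Y)$. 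The formula for $M_J$ is proved identically with Jordan brackets in place of Lie brackets throughout.

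The identity for the deformation tensor is then free: using the preliminary relation $2H_J = N_J + M_J$ and its evident analogue $2H_{J_p} = N_{J_p} + M_{J_p}$, the scalar factor $(p^2+4q)/4$ simply passes through. The whole argument is mechanical linear algebra on the space of $(1,1)$-tensors; the only point requiring mild care is tracking the sign of $\mu$, but since the Nijenhuis, Jordan and deformation tensors depend on $\mu$ only through $\mu^2$, that sign is ultimately irrelevant for the last three identities, and appears with the correct minus sign in (1) and (3) by direct substitution.
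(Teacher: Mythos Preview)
Your proof is correct and is exactly the direct computation the paper alludes to (the paper itself offers no details beyond the phrase ``direct computations provide''). Your organization via the linear substitution $J=\frac{p}{2}I-\frac{\sqrt{p^2+4q}}{2}J_p$, the observation that $J\mapsto[\cdot,\cdot]_J$ is linear with $[\cdot,\cdot]_I=[\cdot,\cdot]$, and the cross-term cancellation in the quadratic expansion of $N_J$ and $M_J$ is clean and complete.
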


The product conjugate connection of a linear connection $\nabla $ is \cite{blcr}:
\begin{equation}\label{co}
\nabla ^{({J_p})}_XY=\mathcal{P}(\nabla _X\mathcal{P}Y)-\mathcal{P}(\nabla _X\mathcal{P}'Y)-\mathcal{P}'(\nabla
_X\mathcal{P}Y)+\mathcal{P}'(\nabla _X\mathcal{P}'Y)
\end{equation}
and we have:

\begin{proposition} \cite{blcr}
If $\nabla ^{({J_p})}$ is torsion-free, then
${J_p}$ is integrable, which means that ${\mathcal D}$ and ${\mathcal
D}'$ are integrable distributions.
\end{proposition}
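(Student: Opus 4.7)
The plan is to exploit the fact that $J_p$ is an almost product structure whose $(\pm 1)$-eigendistributions are precisely $\mathcal{D}=\ker\mathcal{P}'$ and $\mathcal{D}'=\ker\mathcal{P}$, and to reduce the vanishing of the torsion of $\nabla^{(J_p)}$ to the involutivity of each of these two distributions. Frobenius then yields integrability of the leaves, and the standard equivalence between integrability of an almost product structure and simultaneous integrability of its eigendistributions finishes the job.

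First I would take $X,Y\in\Gamma(\mathcal{D})$, so that $\mathcal{P}X=X$, $\mathcal{P}'X=0$, and likewise for $Y$. Substituting into (\ref{co}) and using $\mathcal{P}\circ\mathcal{P}'=\mathcal{P}'\circ\mathcal{P}=0$, only the $\mathcal{P}Y$-terms survive and I obtain
$$\nabla^{(J_p)}_X Y = \mathcal{P}(\nabla_X Y) - \mathcal{P}'(\nabla_X Y) = J_p(\nabla_X Y).$$
Swapping $X$ and $Y$, subtracting, and using that the Levi-Civita $\nabla$ is torsion-free so that $\nabla_X Y-\nabla_Y X=[X,Y]$, I arrive at
$$\nabla^{(J_p)}_X Y - \nabla^{(J_p)}_Y X = J_p[X,Y].$$
The vanishing of the torsion of $\nabla^{(J_p)}$ then reads $J_p[X,Y]=[X,Y]$, i.e.\ $[X,Y]$ lies in the $+1$-eigenspace $\mathcal{D}$, which shows that $\mathcal{D}$ is involutive.

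Next, I would repeat the argument with $X,Y\in\Gamma(\mathcal{D}')$: now $\mathcal{P}X=0$ and $\mathcal{P}'X=X$, so the formula (\ref{co}) collapses to $\nabla^{(J_p)}_X Y=-\mathcal{P}(\nabla_X Y)+\mathcal{P}'(\nabla_X Y)=-J_p(\nabla_X Y)$. The torsion-free condition then gives $-J_p[X,Y]=[X,Y]$, i.e.\ $[X,Y]\in\Gamma(\mathcal{D}')$. Applying Frobenius to each distribution and recalling that integrability of an almost product structure is equivalent to simultaneous integrability of its two $(\pm 1)$-eigendistributions completes the argument.

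I do not anticipate any serious obstacle: the entire computation is essentially an identity check relying on idempotence and orthogonality of $\mathcal{P},\mathcal{P}'$ together with torsion-freeness of $\nabla$. The only point worth watching is the sign bookkeeping when $X,Y\in\Gamma(\mathcal{D}')$, where the $(-1)$-eigenvalue flips the $J_p$ appearing on the left-hand side, producing the matching sign that still yields membership in $\mathcal{D}'$.
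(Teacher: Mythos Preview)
The paper does not supply its own proof of this proposition; it is simply quoted from \cite{blcr}. So there is no ``paper's approach'' to compare against, and your argument stands on its own.

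Your proof is correct and is the natural direct argument. Two small remarks are worth making. First, you may streamline the computation by observing once and for all that, since $J_p^2=I$, the defining formula (\ref{co}) is nothing but $\nabla^{(J_p)}_X Y = J_p(\nabla_X J_p Y)$; then for $X,Y\in\Gamma(\mathcal D)$ one has $J_pY=Y$ and your identity $\nabla^{(J_p)}_X Y = J_p(\nabla_X Y)$ is immediate, and similarly on $\mathcal D'$. Second, you invoke torsion-freeness of $\nabla$ (calling it Levi--Civita) to pass from $\nabla_X Y-\nabla_Y X$ to $[X,Y]$. In the paper's context this is harmless, since $\nabla$ is throughout the Levi--Civita connection of $g$; but note that the proposition as phrased speaks only of ``a linear connection $\nabla$''. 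If $\nabla$ carries torsion, the computation for $X,Y\in\Gamma(\mathcal D)$ gives instead $J_p\big([X,Y]+T^\nabla(X,Y)\big)=[X,Y]$, and one needs an extra word to extract $\mathcal P'[X,Y]=0$ from this (e.g.\ by also evaluating the torsion of $\nabla^{(J_p)}$ on mixed pairs, or by appealing directly to the identity $T^{\nabla^{(J_p)}}=J_p T^\nabla(J_p\cdot,J_p\cdot)-N_{J_p}$ for torsion-free $\nabla$ and its analogue in general). You should either state explicitly that you are assuming $\nabla$ torsion-free, or add this small extension.
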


\begin{definition}
We say that a linear connection $\nabla$ restricts to a distribution $\mathcal{D}\subset TM$ on a metallic pseudo-Riemannian manifold $(M,J,g)$
if $Y\in \Gamma (\mathcal{D})$ implies $\nabla_XY \in \Gamma (\mathcal{D})$, for any $X\in C^{\infty}(TM)$.
\end{definition}

We have: \\
1) $\nabla $ restricts to ${\mathcal D}$ means $\mathcal{P}'(\nabla
_X\mathcal{P}Y)=0$ and $\mathcal{P}(\nabla _X\mathcal{P}Y)=\nabla _X\mathcal{P}Y$, \\
2) $\nabla $ restricts to ${\mathcal D}'$ means $\mathcal{P}(\nabla _X\mathcal{P}'Y)=0$
and $\mathcal{P}'(\nabla _X\mathcal{P}'Y)=\nabla _X\mathcal{P}'Y$.

A straightforward computation gives that the product conjugate connection $\nabla ^{({J_p})}$
defined by (\ref{co}) restricts to ${\mathcal D}$ and ${\mathcal D }'$.
Moreover, if $\nabla $ restricts to both ${\mathcal D}$ and ${\mathcal D }'$, then
\begin{equation}\label{cc}
\nabla ^{({J_p})}_XY=\nabla _X\mathcal{P}Y+\nabla _X\mathcal{P}'Y=\nabla _XY
\end{equation}
and so $\nabla$ is an ${J_p}$-connection. Let us remark that the above
connection (\ref{cc}) is exactly the Schouten-van Kampen connection of the
pair $(\mathcal{D}, \mathcal{D}')$:
$$
\nabla _XY=\mathcal{P}(\nabla _X\mathcal{P}Y)+\mathcal{P}'(\nabla _X\mathcal{P}'Y)
$$
which coincides with
the metallic natural connection $\tilde{\nabla}$ if $\nabla$ is the Levi-Civita connection of $g$.

Now we can express the Kirichenko tensor fields \cite{v:k} in
terms of the projectors $\mathcal{P}$, $\mathcal{P}'$:

\begin{proposition} \cite{blcr}
The structural and virtual tensor
fields of ${J_p}=\mathcal{P}-\mathcal{P}'$ are:
$$
\left\{
  \begin{array}{ll}
    C^{\mathcal{P}-\mathcal{P}'}_{\nabla }(X,Y)=2[\mathcal{P}(\nabla _{\mathcal{P}'X}\mathcal{P}'Y)+\mathcal{P}'(\nabla _{\mathcal{P}X}\mathcal{P}Y)] \\
    B^{\mathcal{P}-\mathcal{P}'}_{\nabla }(X,Y)=-2[\mathcal{P}(\nabla _{\mathcal{P}X}\mathcal{P}'Y)+\mathcal{P}'(\nabla _{\mathcal{P}'X}\mathcal{P}Y)].
  \end{array}
\right.
$$
\end{proposition}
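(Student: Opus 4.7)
The plan is to specialize the general Kirichenko definitions of the structural tensor $C^F_\nabla$ and virtual tensor $B^F_\nabla$ of a $(1,1)$-tensor field $F$ to the almost product structure $F = J_p = \mathcal{P}-\mathcal{P}'$ (which satisfies $F^2 = I$), and then to collapse everything using the projector identities $\mathcal{P}^2 = \mathcal{P}$, $\mathcal{P}'^2 = \mathcal{P}'$, $\mathcal{P}+\mathcal{P}' = I$, $\mathcal{P}\circ\mathcal{P}' = \mathcal{P}'\circ\mathcal{P} = 0$. The first step is to establish the auxiliary identity
\[
(\nabla_X J_p)(Y) = 2\,\mathcal{P}'(\nabla_X \mathcal{P} Y) - 2\,\mathcal{P}(\nabla_X \mathcal{P}' Y),
\]
which follows from $J_p Y = \mathcal{P}Y - \mathcal{P}'Y$, the Leibniz rule, and the resolution $\mathcal{P}+\mathcal{P}' = I$ applied to each unadorned $\nabla_X Y$. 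In particular, this already displays the ``off-diagonal'' character of $\nabla J_p$ with respect to the splitting $TM = \mathcal{D}\oplus \mathcal{D}'$.

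Next, I would substitute this expression into the Kirichenko definitions (which are symmetric and antisymmetric combinations of $F\circ(\nabla_\cdot F)$ and $(\nabla_{F\cdot}F)$), and then compose with the outer factor $\mathcal{P}-\mathcal{P}'$ as required. Idempotency $\mathcal{P}^2 = \mathcal{P}$, $\mathcal{P}'^2 = \mathcal{P}'$ reduces any surviving product to a single projection, and orthogonality $\mathcal{P}\circ\mathcal{P}' = \mathcal{P}'\circ\mathcal{P} = 0$ annihilates every ``mismatched'' composition. Finally I would further decompose the argument of the derivative as $X = \mathcal{P}X + \mathcal{P}'X$; the same orthogonality cancellations should isolate exactly the combinations $\mathcal{P}(\nabla_{\mathcal{P}'X} \mathcal{P}'Y) + \mathcal{P}'(\nabla_{\mathcal{P}X} \mathcal{P}Y)$ for the structural tensor $C^{J_p}_\nabla$ and $\mathcal{P}(\nabla_{\mathcal{P}X} \mathcal{P}'Y) + \mathcal{P}'(\nabla_{\mathcal{P}'X} \mathcal{P}Y)$ for the virtual tensor $B^{J_p}_\nabla$, with the factors $+2$ and $-2$ coming from the two equal contributions in the expansion of $(\nabla_X J_p)(Y)$ above.

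The main obstacle is purely combinatorial bookkeeping: the expansion yields a moderate number of terms (two choices of outer projector combined with two choices of inner projector on each of the two arguments, with a relative sign from $J_p = \mathcal{P}-\mathcal{P}'$), and one must be careful to keep the signs straight between the symmetric and antisymmetric parts entering $B^{J_p}_\nabla$ and $C^{J_p}_\nabla$. There is no genuine geometric difficulty; neither the metric $g$ nor any curvature quantity intervenes, and once the projector form of $\nabla J_p$ is fixed, everything reduces to systematic applications of idempotency and orthogonality.
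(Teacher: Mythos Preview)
The paper does not supply its own proof of this proposition: it is quoted verbatim from \cite{blcr} and simply stated without argument. Your proposal therefore cannot be compared to a proof in the paper, but it is a correct and natural derivation. Your auxiliary identity $(\nabla_X J_p)(Y) = 2\,\mathcal{P}'(\nabla_X \mathcal{P} Y) - 2\,\mathcal{P}(\nabla_X \mathcal{P}' Y)$ is right, and feeding it into the Kirichenko definitions $B^{F}_\nabla(X,Y)=\tfrac{1}{2}[F(\nabla_X F)Y+(\nabla_{FX}F)Y]$ and $C^{F}_\nabla(X,Y)=\tfrac{1}{2}[(\nabla_{FX}F)Y-F(\nabla_X F)Y]$ together with the splitting $X=\mathcal{P}X+\mathcal{P}'X$ and the projector identities yields exactly the stated formulas, with the factors $\pm 2$ arising just as you say. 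The only omission in your write-up is that you never state the precise Kirichenko definitions you are using; since the paper does not record them either, you should include them explicitly so the computation can be checked line by line.
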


Let us recall the well-known {\it fundamental tensor fields} of O'Neill-Gray:
$$
\left\{
  \begin{array}{ll}
    T(X, Y)=\mathcal{P}(\nabla _{\mathcal{P}'X}\mathcal{P}'Y)+\mathcal{P}'(\nabla _{\mathcal{P}'X}\mathcal{P}Y) \\
    A(X, Y)=\mathcal{P}'(\nabla _{\mathcal{P}X}\mathcal{P}Y)+\mathcal{P}(\nabla _{\mathcal{P}X}\mathcal{P}'Y).
  \end{array}
\right.
$$

Then, a comparison of last two equations yields
$$
\left\{
  \begin{array}{ll}
    C^{\mathcal{P}-\mathcal{P}'}_{\nabla }(X, Y)=2[T(X, \mathcal{P}'Y)+A(X, \mathcal{P}Y)] \\
    B^{\mathcal{P}-\mathcal{P}'}_{\nabla }(X, Y)=-2[T(X, \mathcal{P}Y)+A(X, \mathcal{P}'Y)]
  \end{array}
\right.
$$
a fact which justifies the second name of $T$ and $A$ as {\it
invariants} of the decomposition $TM=\mathcal{D}\oplus \mathcal{D}'$ \cite{f:ip}.

\bigskip

On $\mathcal{D}$ with the induced metric $g_{\mathcal{D}}$, we consider the induced connection from the pseudo-Riemannian manifold $(M,g,\nabla)$ by \cite{l}:
$$
\nabla^{\mathcal{D}}: \Gamma(\mathcal{D})\times \Gamma(\mathcal{D}) \rightarrow \Gamma(\mathcal{D}), \ \
\nabla^{\mathcal{D}}_XY:=\mathcal{P}(\nabla_XY)
$$
which is metric w.r.t. $g_{\mathcal{D}}$ and torsion-free w.r.t. the bracket
$$[\cdot,\cdot]_{\mathcal{D}}:\Gamma(\mathcal{D})\times \Gamma(\mathcal{D}) \rightarrow \Gamma(\mathcal{D}), \ \
[X,Y]_{\mathcal{D}}:=\mathcal{P}([X,Y]).$$
The bracket $[\cdot,\cdot]_{\mathcal{D}}$ has the usual properties of a Lie bracket excepting Jacobi identity which is satisfied if and only if $\mathcal{D}$ is integrable.

The integrability of $\mathcal{D}$ can also be characterized in terms of second fundamental form of $\mathcal{D}$:
$$h: \Gamma(\mathcal{D})\times \Gamma(\mathcal{D}) \rightarrow \Gamma(\mathcal{D}'), \ \
h(X,Y):=\nabla_XY-\nabla^{\mathcal{D}}_XY,$$
and we can state:
\begin{proposition} \cite{l}
The distribution $\mathcal{D}$ is integrable if and only if one of the following assertions holds:
i) $\nabla^{\mathcal{D}}$ is torsion-free; ii) $h$ is symmetric.
\end{proposition}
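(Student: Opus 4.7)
The plan is to extract everything from the torsion-freeness of the ambient Levi-Civita connection $\nabla$. For $X,Y\in\Gamma(\mathcal{D})$ we have $\nabla_XY-\nabla_YX=[X,Y]$ in $\Gamma(TM)$, and the right-hand side has a unique decomposition into $\mathcal{D}$ and $\mathcal{D}'$ components via $I=\mathcal{P}+\mathcal{P}'$. Applying $\mathcal{P}$ and $\mathcal{P}'$ respectively gives the two master identities
$$\nabla^{\mathcal{D}}_XY-\nabla^{\mathcal{D}}_YX=\mathcal{P}([X,Y]),\qquad h(X,Y)-h(Y,X)=\mathcal{P}'([X,Y]),$$
since $\nabla^{\mathcal{D}}_XY=\mathcal{P}(\nabla_XY)$ and $h(X,Y)=\nabla_XY-\nabla^{\mathcal{D}}_XY=\mathcal{P}'(\nabla_XY)$. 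Everything else is reading off these two formulas.

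For assertion (i), I would interpret ``$\nabla^{\mathcal{D}}$ is torsion-free'' in the ambient sense, i.e.\ $\nabla^{\mathcal{D}}_XY-\nabla^{\mathcal{D}}_YX=[X,Y]$ for all $X,Y\in\Gamma(\mathcal{D})$. Combining this with the first identity above forces $\mathcal{P}([X,Y])=[X,Y]$, equivalently $\mathcal{P}'([X,Y])=0$, i.e.\ $[X,Y]\in\ker\mathcal{P}'=\mathcal{D}$. Conversely, if $\mathcal{D}$ is involutive then $[X,Y]\in\Gamma(\mathcal{D})$, so $\mathcal{P}([X,Y])=[X,Y]$ and torsion-freeness follows from the first identity.

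For assertion (ii), symmetry of $h$ reads $h(X,Y)-h(Y,X)=0$, which by the second identity is exactly $\mathcal{P}'([X,Y])=0$, again equivalent to $[X,Y]\in\mathcal{D}$. Frobenius' theorem, already recalled in the excerpt, then converts involutivity into integrability, closing both equivalences.

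The main (mild) obstacle is disambiguating the meaning of ``torsion-free'' in (i): the preceding paragraph states that $\nabla^{\mathcal{D}}$ is torsion-free with respect to the projected bracket $[\cdot,\cdot]_{\mathcal{D}}=\mathcal{P}([\cdot,\cdot])$, which is automatic and thus cannot be what the proposition means. So one has to read (i) as torsion-freeness against the ambient Lie bracket on $M$, which is the only reading that produces the claimed equivalence; once this convention is fixed, the argument is the two-line computation sketched above.
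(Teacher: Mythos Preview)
The paper does not give its own proof of this proposition; it is quoted verbatim from the reference \cite{l}, so there is nothing in the paper to compare against. Your argument is the standard one and is correct: projecting the torsion-free identity $\nabla_XY-\nabla_YX=[X,Y]$ by $\mathcal{P}$ and $\mathcal{P}'$ yields exactly the two ``master identities'' you wrote, and each of (i), (ii) is then read off as the vanishing of $\mathcal{P}'([X,Y])$.

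Your closing remark about the meaning of ``torsion-free'' is well taken and is in fact the only subtle point. The paper itself records, just before the proposition, that $\nabla^{\mathcal{D}}$ is \emph{always} torsion-free with respect to the projected bracket $[\cdot,\cdot]_{\mathcal{D}}$; so, as you say, (i) must be understood relative to the ambient Lie bracket on $M$ (equivalently, as the statement that the $TM$-valued tensor $\nabla^{\mathcal{D}}_XY-\nabla^{\mathcal{D}}_YX-[X,Y]$ vanishes). With that reading your proof goes through verbatim.
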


Similarly, on $(\mathcal{D}',g_{\mathcal{D}'})$ we define the induced connection from $(M,g,\nabla)$ by:
$$
\nabla^{\mathcal{D}'}: \Gamma(\mathcal{D}')\times \Gamma(\mathcal{D}') \rightarrow \Gamma(\mathcal{D}'), \ \
\nabla^{\mathcal{D}'}_XY:=\mathcal{P}'(\nabla_XY)
$$
and consider the second fundamental form $h'$ of $\mathcal{D}'$. Then the distribution $\mathcal{D}'$ is integrable if and only if one of the following assertions holds: i) $\nabla^{\mathcal{D}'}$ is torsion-free; ii) $h'$ is symmetric.

\bigskip

Remark that the restrictions of the metallic natural connection $\tilde{\nabla}$, defined in \cite{bn1}, to $\mathcal{D}$ and respectively, to $\mathcal{D}'$, coincide with the two induced connections, respectively:
$$\tilde{\nabla}|_{\Gamma(\mathcal{D})\times \Gamma(\mathcal{D})}=\nabla^{\mathcal{D}},  \ \ \tilde{\nabla}|_{\Gamma(\mathcal{D}')\times \Gamma(\mathcal{D}')}=\nabla^{\mathcal{D}'}.$$

\begin{remark}
For $p^2+4q=0$, we get only one distribution, $\ker (J - \frac{p}{2} I)$, and $J_t:= J - \frac{p}{2} I$ is an almost subtangent structure.
\end{remark}

\section{Adapted connections to $(\mathcal{D},\mathcal{D}')$}

\begin{definition}
We say that a linear connection $\nabla$ on $M$ is \textit{adapted} to the decomposition $TM=\mathcal{D}\oplus \mathcal{D}'$ if $Y\in \Gamma(\mathcal{D})$ implies $\nabla_XY\in \Gamma(\mathcal{D})$, for any $X\in C^{\infty}(TM)$ and $Y\in \Gamma(\mathcal{D}')$ implies $\nabla_XY\in \Gamma(\mathcal{D}')$, for any $X\in C^{\infty}(TM)$.
\end{definition}

\begin{remark} \label{r2}
If $(M,J)$ is a metallic manifold such that $J^2=pJ+qI$ with $p^2+4q>0$, then a linear connection $\nabla$ is adapted to $(\mathcal{D},\mathcal{D}')$ given by (\ref{d}) if and only if $\nabla$ is a $J$-connection. Indeed, for $Y\in \Gamma(\mathcal{D})$ we have $JY=\sigma_{-}Y$ and $(\nabla_XJ)Y=\sigma_{-}\nabla_XY-J(\nabla_XY)$, for any $X\in C^{\infty}(TM)$, which implies that $\nabla_XY\in \Gamma(\mathcal{D})$ if and only if $\nabla J=0$. Similarly we deduce the second implication.
\end{remark}

In \cite{b:f}, A. Bejancu and H. R. Farran gave the expression of all adapted connections to $(\mathcal{D}, \mathcal{D}')$, namely:
\begin{equation}\label{1000}
\nabla^*_XY= \mathcal{P}(\nabla_X\mathcal{P}Y)+\mathcal{P}'(\nabla_X\mathcal{P}'Y)+\mathcal{P}(S(X,\mathcal{P}Y))+
\mathcal{P}'(S(X,\mathcal{P}'Y)),
\end{equation}
for any $X$, $Y \in C^{\infty}(TM)$, where $\nabla$ is a linear connection and $S$ is a $(1,2)$-tensor field on $M$.

\subsection{Schouten-van Kampen connection}
An adapted connection to $(\mathcal{D}, \mathcal{D}')$ is \textit{the Schouten-van Kampen connection} $\tilde{\nabla}$ of the linear connection $\nabla$, obtained from (\ref{1000}) for $S:=0$:
\begin{equation}\tilde{\nabla}_XY:=\mathcal{P}(\nabla_X\mathcal{P}Y)+\mathcal{P}'(\nabla_X\mathcal{P}'Y)=\end{equation}
$$={\nabla}_XY+\mathcal{P}((\nabla_X\mathcal{P})Y)+\mathcal{P}'((\nabla_X\mathcal{P}')Y).$$

If $(M,J,g)$ is a metallic pseudo-Riemannian manifold such that $J^2=pJ+qI$ with $p^2+4q>0$ and $\nabla$ is torsion-free, then $\tilde{\nabla}$ is explicitly given by:
\begin{equation}\label{c}
\tilde{\nabla}_XY=\frac{1}{p^2+4q}[(2J-pI)(\nabla_XJY)-(pJ-(p^2+2q)I)(\nabla_XY)],
\end{equation}
for any $X$, $Y\in C^{\infty}(TM)$. Remark that if $\nabla$ is the Levi-Civita connection associated to $g$, then $\tilde{\nabla}$ is exactly the metallic natural connection defined in \cite{bn1}. Moreover, $\tilde{\nabla}$ is a metric $J$-connection, i.e. $\tilde{\nabla}g=\tilde{\nabla}J=0$, whose torsion is given by:
$$T^{\tilde{\nabla}}(X,Y)=\frac{1}{p^2+4q}[(2J-pI)({\nabla}_X JY-{\nabla}_Y JX)-(pJ+2qI)(\nabla_XY-\nabla_YX)],$$
for any $X$, $Y \in C^{\infty}(TM)$.

\subsection{Vr\u anceanu connection}

Another adapted connection to $(\mathcal{D}, \mathcal{D}')$ is \textit{Vr\u anceanu connection} $\bar{\nabla}$ of the linear connection $\nabla$,
obtained from (\ref{1000}) for
$$S(X,Y):=-\mathcal{P}(\nabla_{\mathcal{P}'X}\mathcal{P}Y)-\mathcal{P}'(\nabla_{\mathcal{P}'X}\mathcal{P}'Y)
+\mathcal{P}([\mathcal{P}'X,\mathcal{P}Y])+\mathcal{P}'([\mathcal{P}X,\mathcal{P}'Y]).$$

If $(M,J,g)$ is a metallic pseudo-Riemannian manifold such that $J^2=pJ+qI$ with $p^2+4q>0$, then $\bar{\nabla}$ is explicitly given by:
\begin{equation}\label{vr}
\bar{\nabla}_XY=\tilde{\nabla}_{\mathcal{P}X}Y+\mathcal{P}([\mathcal{P}'X,\mathcal{P}Y])+\mathcal{P}'([\mathcal{P}X,\mathcal{P}'Y])=\end{equation}
$$={\nabla}_XY+\frac{1}{p^2+4q}[2J((\nabla_{X}J)Y)-p(\nabla_{X}J)Y+J((\nabla_{Y}J)X)+(\nabla_{JY}J)X-p({\nabla}_YJ)X]+
$$$$+\frac{1}{p^2+4q}[T^{\nabla}(JX,JY)+J(T^{\nabla}(JX,Y))-pT^{\nabla}(JX,Y)-J(T^{\nabla}(X,JY))-qT^{\nabla}(X,Y)],$$
for any $X$, $Y\in C^{\infty}(TM)$.

Moreover, $\bar{\nabla}$ is a $J$-connection, i.e. $\bar{\nabla}J=0$, whose torsion is given by:
$$
T^{\bar{\nabla}}(X,Y)=\frac{1}{p^2+4q}N_J(X,Y)+\mathcal{P}'(T^{\nabla}(\mathcal{P}'X,\mathcal{P}'Y))-\mathcal{P}(T^{\nabla}(\mathcal{P}X,\mathcal{P}Y)),
$$
for any $X$, $Y\in C^{\infty}(TM)$.

\subsection{Vidal connection}

Let $(M,J,g)$ be a metallic pseudo-Riemannian manifold such that $J^2=pJ+qI$ with $p^2+4q>0$ and let $\nabla$ be the Levi-Civita connection of $g$.

Another adapted connection to $(\mathcal{D}, \mathcal{D}')$ is \textit{the Vidal connection} $\tilde{\tilde{\nabla}}$ associated to $J$,
obtained from (\ref{1000}) for
$$S(X,Y):=-\mathcal{P}(\nabla_{\mathcal{P}Y}\mathcal{P}')X-\mathcal{P}'(\nabla_{\mathcal{P}'Y}\mathcal{P})X,$$
therefore:
\begin{equation}\label{v}
\tilde{\tilde{\nabla}}_XY=\tilde{\nabla}_XY-\mathcal{P}(\nabla_{\mathcal{P}Y}\mathcal{P}')X-\mathcal{P}'(\nabla_{\mathcal{P}'Y}\mathcal{P})X=\end{equation}$$=\tilde{\nabla}_XY+\frac{1}{p^2+4q}[(\nabla_{JY}J)X+J((\nabla_YJ)X)-p(\nabla_YJ)X]=$$
$$=\nabla_XY+\frac{1}{p^2+4q}[2J((\nabla_XJ)Y)-p(\nabla_XJ)Y+J((\nabla_YJ)X)+(\nabla_{JY}J)X-p(\nabla_YJ)X],$$
for any $X$, $Y\in C^{\infty}(TM)$.

Moreover, $\tilde{\tilde{\nabla}}$ is a $J$-connection, i.e. $\tilde{\tilde{\nabla}}J=0$, whose torsion is given by:
$$
T^{\tilde{\tilde{\nabla}}}(X,Y)=\frac{1}{p^2+4q}N_J(X,Y),
$$
for any $X$, $Y\in C^{\infty}(TM)$.

\begin{remark} Vr\u anceanu connection of the Levi-Civita connection coincides with the Vidal connection.
\end{remark}

Moreover, we get:
$$
({\tilde{\tilde{\nabla}}}_Xg)(Y,Z)=-\frac{1}{p^2+4q}[g((\nabla_{JY}J)X-(\nabla_YJ)JX,Z)+g((\nabla_{JZ}J)X-(\nabla_ZJ)JX,Y)]=
$$
$$=\frac{1}{p^2+4q}[g(M_J(Y,X),Z)+g(M_J(Z,X),Y)+$$$$+g((\nabla_{JX}J)Y+(\nabla_{Y}J)JX,Z)+g((\nabla_{JX}J)Z+(\nabla_{Z}J)JX,Y)],$$
for any $X$, $Y$, $Z\in C^{\infty}(TM)$.

\bigskip

Since $\tilde{\nabla}J=\bar{\nabla}J=\tilde{\tilde{\nabla}}J=0$, from Remark \ref{r1}
we deduce:
\begin{proposition}
The distributions $\mathcal{D}$ and $\mathcal{D}'$ are $\tilde{\nabla}$-geodesically invariant, $\bar{\nabla}$-geo\-de\-sically invariant and $\tilde{\tilde{\nabla}}$-geodesically invariant.
\end{proposition}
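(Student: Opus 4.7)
The plan is very short, because this proposition is essentially a corollary of material already in place. The approach will consist of two observations strung together.

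First, I would invoke Remark \ref{r1}. That remark shows, for an arbitrary linear connection $\nabla$, that a distribution $\mathcal{D}$ (respectively $\mathcal{D}'$) is $\nabla$-geodesically invariant if and only if $(\nabla_X J)Y + (\nabla_Y J)X = 0$ for all $X,Y \in \Gamma(\mathcal{D})$ (respectively $\Gamma(\mathcal{D}')$). The point is that this criterion is vacuously satisfied whenever $\nabla J = 0$, since then $(\nabla_X J)Y = 0$ for every pair $X, Y \in C^{\infty}(TM)$, not merely for sections of a particular distribution.

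Second, I would appeal to facts already verified earlier in Section~4: the Schouten--van Kampen connection $\tilde{\nabla}$ was shown, immediately after equation (\ref{c}), to satisfy $\tilde{\nabla} J = 0$ (indeed, $\tilde{\nabla} g = 0$ as well); the Vr\u anceanu connection $\bar{\nabla}$ was shown, immediately after equation (\ref{vr}), to satisfy $\bar{\nabla} J = 0$; and the Vidal connection $\tilde{\tilde{\nabla}}$ was shown, immediately after equation (\ref{v}), to satisfy $\tilde{\tilde{\nabla}} J = 0$. Thus each of the three adapted connections is a $J$-connection.

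Combining the two observations, the geodesic-invariance criterion of Remark \ref{r1} holds trivially for $\nabla \in \{\tilde{\nabla},\bar{\nabla},\tilde{\tilde{\nabla}}\}$ on all of $\mathcal{D}$ and $\mathcal{D}'$, and the proposition follows. There is no genuine obstacle here; the only point that warrants a moment's care is to note that the verifications of $\nabla J = 0$ in the preceding subsections are valid for arbitrary $X,Y$, so that the conclusion applies in particular to $X,Y$ lying in either of the two metallic distributions. This is precisely why the author introduces the proposition with the phrase \emph{``from Remark \ref{r1} we deduce''}.
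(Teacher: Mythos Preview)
Your proposal is correct and matches the paper's approach exactly: the proposition is stated immediately after the sentence ``Since $\tilde{\nabla}J=\bar{\nabla}J=\tilde{\tilde{\nabla}}J=0$, from Remark \ref{r1} we deduce,'' and no further proof is given. Your write-up simply unpacks that one-line deduction.
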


Using the Vidal connection $\tilde{\tilde{\nabla}}$, we characterize the integrability and the geodesically invariance of the metallic distributions defined by $J$ in terms of the torsion and the covariant derivative of $g$ w.r.t. to this connection.
From all the above considerations, we can state:
\begin{theorem}
If $(M,J,g)$ is a metallic pseudo-Riemannian manifold such that $J^2=pJ+qI$ with $p^2+4q>0$, then the following assertions are equivalent:

(i) the distributions $\mathcal{D}$ and $\mathcal{D}'$ are integrable;

(ii) $N_J=0$;

(iii) $L=0$ and $L'=0$;

(iv) the Vidal connection given by (\ref{v}) is torsion-free.
\end{theorem}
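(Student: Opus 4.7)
The plan is to establish the chain of equivalences by assembling results already proved in the excerpt and performing only elementary algebraic manipulations; no substantial new computation is needed.

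First I would handle (i) $\Leftrightarrow$ (ii) by invoking the proposition that characterizes integrability of $\mathcal{D}$ and $\mathcal{D}'$: the former asserts $J\circ N_J=\sigma_-N_J$ and the latter $J\circ N_J=\sigma_+N_J$. If both distributions are integrable, subtracting these identities gives $(\sigma_+-\sigma_-)N_J=0$, and since $\sigma_+-\sigma_-=\sqrt{p^2+4q}\neq 0$, we conclude $N_J=0$. The converse is immediate from the same proposition.

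Next I would establish (ii) $\Leftrightarrow$ (iii) using the explicit formulas
\[
L=\frac{1}{2(p^2+4q)\sqrt{p^2+4q}}(\sigma_-N_J-J\circ N_J),\qquad L'=-\frac{1}{2(p^2+4q)\sqrt{p^2+4q}}(\sigma_+N_J-J\circ N_J)
\]
displayed earlier. The forward direction is evident. For the reverse, adding the two expressions makes the $J\circ N_J$ terms cancel and yields
\[
L+L'=\frac{\sigma_--\sigma_+}{2(p^2+4q)\sqrt{p^2+4q}}\,N_J=-\frac{1}{2(p^2+4q)}\,N_J,
\]
so $L=L'=0$ forces $N_J=0$. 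Finally, (ii) $\Leftrightarrow$ (iv) follows immediately from the formula $T^{\tilde{\tilde{\nabla}}}(X,Y)=\frac{1}{p^2+4q}N_J(X,Y)$ recorded in the subsection on the Vidal connection.

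No step presents a genuine obstacle; the theorem is essentially a bookkeeping summary packaging together three characterizations of $N_J=0$ that have already been worked out separately. The only mildly delicate point is confirming that the pair $(L,L')$ carries exactly the same information as $N_J$, rather than strictly weaker data, which is why the reduction via $L+L'$ above is the key observation.
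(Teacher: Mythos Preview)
Your proposal is correct and matches the paper's approach: the paper introduces the theorem with ``From all the above considerations, we can state'' and gives no separate proof, so the result is indeed just a packaging of the earlier proposition on integrability (which already contains (i)$\Leftrightarrow$(ii)), the displayed formulas for $L$ and $L'$, and the torsion formula $T^{\tilde{\tilde{\nabla}}}=\frac{1}{p^2+4q}N_J$. Your only addition is spelling out the elementary algebra (the subtraction for (i)$\Rightarrow$(ii) and the sum $L+L'$ for (iii)$\Rightarrow$(ii)), which is exactly the bookkeeping the paper leaves implicit.
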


\begin{theorem}
If $(M,J,g)$ is a metallic pseudo-Riemannian manifold such that $J^2=pJ+qI$ with $p^2+4q>0$, then the following assertions are equivalent:

(i) the distributions $\mathcal{D}$ and $\mathcal{D}'$ are geodesically invariant;

(ii) $M_J=0$;

(iii) $K=0$ and $K'=0$;

(iv) the Vidal connection given by (\ref{v}) is metric with respect to $g$.
\end{theorem}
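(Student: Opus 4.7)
The plan is to establish the four assertions as equivalent by pivoting through (ii), i.e.\ $M_J=0$, exactly mirroring the preceding theorem on integrability. I would prove (i)$\Leftrightarrow$(ii), (ii)$\Leftrightarrow$(iii), and (ii)$\Leftrightarrow$(iv) in turn, leaning in each case on material already collected in Sections~2 and~3.

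For (i)$\Leftrightarrow$(ii), I would invoke the earlier proposition characterising geodesic invariance: $\mathcal{D}$ is geodesically invariant iff $J\circ M_J=\sigma_{-}M_J$, and $\mathcal{D}'$ iff $J\circ M_J=\sigma_{+}M_J$. Subtracting, and using $\sigma_{+}-\sigma_{-}=\sqrt{p^2+4q}\neq 0$, forces $M_J=0$; the converse is immediate. For (ii)$\Leftrightarrow$(iii), I would use the explicit formulas from the Preliminaries,
\[
K=\frac{1}{2(p^2+4q)\sqrt{p^2+4q}}(\sigma_{-}M_J-J\circ M_J),\qquad K'=-\frac{1}{2(p^2+4q)\sqrt{p^2+4q}}(\sigma_{+}M_J-J\circ M_J);
\]
the vanishing of both $K$ and $K'$ gives $J\circ M_J=\sigma_{-}M_J=\sigma_{+}M_J$, again yielding $M_J=0$ by the same eigenvalue argument.

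The core content is (ii)$\Leftrightarrow$(iv). My approach is to simplify the formula for $(\tilde{\tilde{\nabla}}_Xg)(Y,Z)$ displayed just before the theorem, starting from its first form. Since $J$ is $g$-symmetric and $\nabla g=0$, each endomorphism $\nabla_VJ$ is also $g$-symmetric; using this, I would move $X$ past each factor to rewrite
\[
g\bigl((\nabla_{JY}J)X-(\nabla_YJ)JX,\,Z\bigr)=-g\bigl(X,\,H_J(Y,Z)\bigr),
\]
with $H_J(Y,Z)=(J\circ\nabla_YJ-\nabla_{JY}J)(Z)$. Symmetrising in $(Y,Z)$ and using the decomposition $2H_J=N_J+M_J$, in which $N_J$ is antisymmetric and $M_J$ symmetric (so $H_J(Y,Z)+H_J(Z,Y)=M_J(Y,Z)$), the whole expression collapses to
\[
(\tilde{\tilde{\nabla}}_Xg)(Y,Z)=\frac{1}{p^2+4q}\,g\bigl(X,M_J(Y,Z)\bigr),
\]
and nondegeneracy of $g$ delivers (ii)$\Leftrightarrow$(iv).

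The main obstacle I expect is spotting this clean reduction: the displayed formula splits into several seemingly independent $\nabla J$-terms, and it is not visually obvious that they conspire into a single pairing $g(X,M_J(Y,Z))$. Once the $g$-symmetry of $J$ (and hence of each $\nabla_VJ$) is exploited systematically, everything funnels into the deformation tensor $H_J$, whose symmetric part is precisely $\tfrac12 M_J$; that identification is the decisive step.
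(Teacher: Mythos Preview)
Your argument is correct. The paper itself does not spell out a proof of this theorem: it simply writes ``From all the above considerations, we can state'' and records the statement. For (i)$\Leftrightarrow$(ii) and (ii)$\Leftrightarrow$(iii) you are invoking exactly the propositions and the explicit $K,K'$ formulas the paper has prepared, so there is nothing to compare.

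For (ii)$\Leftrightarrow$(iv) your route is in fact sharper than what the paper displays. The paper leaves two expressions for $(\tilde{\tilde{\nabla}}_Xg)(Y,Z)$, the second of which involves $M_J(Y,X)$, $M_J(Z,X)$ and residual $(\nabla J)$-terms, so the equivalence with $M_J=0$ is not read off at a glance. Your reduction---using that each $\nabla_VJ$ is $g$-symmetric to rewrite the first displayed form as $\frac{1}{p^2+4q}\,g\bigl(X,H_J(Y,Z)+H_J(Z,Y)\bigr)$ and then recognising the symmetric part of $H_J$ as $\tfrac12 M_J$---yields the clean identity
\[
(\tilde{\tilde{\nabla}}_Xg)(Y,Z)=\frac{1}{p^2+4q}\,g\bigl(X,M_J(Y,Z)\bigr),
\]
from which (ii)$\Leftrightarrow$(iv) is immediate by nondegeneracy of $g$. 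This is a genuine simplification over the paper's second formula and makes the equivalence transparent; the paper's version, by contrast, would require a further (unstated) manipulation to see why the extra terms vanish exactly when $M_J=0$.
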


\subsection{Leaves correspondence via metallic maps}


We shall provide the condition for a metallic map between two metallic pseudo-Riemannian manifolds to preserve the metallic distributions. We recall the following:

\begin{definition}
A smooth map $\Phi:(M_1,J_1)\rightarrow (M_2,J_2)$ between two metallic manifolds is called a \textit{metallic map} if:
$$
\Phi_{*}\circ J_1=J_2\circ \Phi_{*}.
$$
\end{definition}

\begin{remark}
If $\Phi:(M_1,J_1)\rightarrow (M_2,J_2)$ is a metallic map and $J_i^2=p_iJ_i+q_iI$ with $p_i$ and $q_i$ real numbers, $i=1,2$, then:

i) $\Phi_{*}\circ J_1^{2k+1}=J_2^{2k+1}\circ \Phi_{*}$, for any $k\in \mathbb{N}$;

ii) $([(p_2^2+q_2)-(p_1^2+q_1)]J_1+(p_2q_2-p_1q_1)I)(TM_1)\subset \ker \Phi_*$;

iii) in the particular case when one the structure is product and the other one is complex, then $Im J_1\subset \ker \Phi_*$.
\end{remark}


Consider a metallic map $\Phi:(M_1,J_1)\rightarrow (M_2,J_2)$ between the metallic manifolds $(M_i,J_i)$ such that $J_i^2=p_iJ_i+q_iI$ with $p_i^2+4q_i>0$, $i=1,2$, and assume that the distributions $\mathcal{D}_i$ and $\mathcal{D}_i'$, $i=1,2$, are integrable. Then they define the foliations $\mathcal{F}_i$ and $\mathcal{F}_i'$, $i=1,2$, whose leaves are trivial metallic pseudo-Riemannian manifolds.

Denoting by $\Phi^* \mathcal{D}_2$ the pull-back distribution, i.e.:
$$(\Phi^* \mathcal{D}_2)_x:=\{X_x\in T_xM: {\Phi_*}_x(X_x)\in {\mathcal{D}_2}_{\Phi(x)}\},$$
since $\Phi$ is a metallic map, we get:
$$(\Phi^* \mathcal{D}_2)_x=\{X_x\in T_xM:(J_1-{\sigma_2}_{+}I)(X_x)\in \ker {\Phi_*}_x\},$$
where ${\sigma_i}_{+}=\frac{p_i+\sqrt{p_i^2+4q_i}}{2}$, $i=1,2$ and
$$(\Phi^* \mathcal{D}_2')_x=\{X_x\in T_xM:(J_1-{\sigma_2}_{-}I)(X_x)\in \ker {\Phi_*}_x\},$$
where ${\sigma_i}_{-}=\frac{p_i-\sqrt{p_i^2+4q_i}}{2}$, $i=1,2$.

From the above considerations, we obtain a sufficient condition for the pull-back distribution $\Phi^* \mathcal{D}_2$ to coincide with one of the distributions $\mathcal{D}_1$ or $\mathcal{D}_1'$:

\begin{proposition}
If $\ker \Phi_*=(J_1-{\sigma_2}_{+}I)(\ker (J_1-{\sigma_1}_{+}I))$, then $\Phi^* \mathcal{D}_2=\mathcal{D}_1$. Moreover, if $\Phi$ is a surjective submersion with connected fibers, then a leaf of $\mathcal{F}_2$ corresponds to a leaf of $\mathcal{F}_1$.
\end{proposition}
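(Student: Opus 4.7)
The statement has two distinct parts: a pointwise linear-algebraic identity $\Phi^{*}\mathcal{D}_{2}=\mathcal{D}_{1}$, and a global leaf correspondence under the submersion hypothesis. I would tackle them separately, since they use very different tools.

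For the first part, I would start from the explicit pointwise characterization established in the lines immediately preceding the proposition, namely
$(\Phi^{*}\mathcal{D}_{2})_{x}=\{X_{x}\in T_{x}M_{1}:(J_{1}-{\sigma_{2}}_{+}I)(X_{x})\in\ker{\Phi_{*}}_{x}\}$,
and substitute directly the hypothesis $\ker{\Phi_{*}}_{x}=(J_{1}-{\sigma_{2}}_{+}I)(\ker(J_{1}-{\sigma_{1}}_{+}I))$. The two inclusions then reduce to routine eigenvalue chasing on the decomposition $T_{x}M_{1}=\mathcal{D}_{1,x}\oplus\mathcal{D}_{1,x}'$: the operator $J_{1}-{\sigma_{2}}_{+}I$ acts as multiplication by ${\sigma_{1}}_{-}-{\sigma_{2}}_{+}$ on $\mathcal{D}_{1,x}$ and by ${\sigma_{1}}_{+}-{\sigma_{2}}_{+}$ on $\mathcal{D}_{1,x}'$, so comparing the image of $X_{x}=U+V$ under $J_{1}-{\sigma_{2}}_{+}I$ with elements of the image $(J_{1}-{\sigma_{2}}_{+}I)(\ker(J_{1}-{\sigma_{1}}_{+}I))$ yields the required equality by elementary algebra.

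For the leaf correspondence, given a leaf $L_{2}$ of $\mathcal{F}_{2}$, I would consider $\Phi^{-1}(L_{2})\subset M_{1}$. Since $\Phi$ is a submersion, this is a smooth (a priori immersed) submanifold, whose tangent space at any $x$ satisfies
$T_{x}\Phi^{-1}(L_{2})={\Phi_{*}}_{x}^{-1}(T_{\Phi(x)}L_{2})={\Phi_{*}}_{x}^{-1}(\mathcal{D}_{2,\Phi(x)})=(\Phi^{*}\mathcal{D}_{2})_{x}=\mathcal{D}_{1,x}$
by the first part. Hence $\Phi^{-1}(L_{2})$ is an integral submanifold of $\mathcal{D}_{1}$ and therefore lies in a unique leaf $L_{1}$ of $\mathcal{F}_{1}$. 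Connectedness of $\Phi^{-1}(L_{2})$ follows because it fibers over the connected set $L_{2}$ with connected fibers (the surjectivity and connected-fibers hypothesis on $\Phi$). For maximality, given any $y\in L_{1}$, I would join $y$ by a piecewise-smooth path $\gamma$ in $L_{1}$ to a point of $\Phi^{-1}(L_{2})$; since $\gamma'\in\mathcal{D}_{1}=\Phi^{*}\mathcal{D}_{2}$, the image $\Phi\circ\gamma$ is tangent to $\mathcal{D}_{2}$ and ends in $L_{2}$, so by uniqueness of leaves of $\mathcal{F}_{2}$ it stays in $L_{2}$, forcing $\gamma\subseteq\Phi^{-1}(L_{2})$. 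Thus $L_{1}=\Phi^{-1}(L_{2})$, giving the leaf correspondence.

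The main obstacle is not the algebra of the first part but the topological subtleties of the second: leaves are in general only immersed, not embedded, so ``$\Phi^{-1}(L_{2})$'' requires care in its intrinsic topology, and showing that it is a single connected integral submanifold rather than a disjoint union of such submanifolds is precisely where the connected-fibers hypothesis is used in an essential way. The maximality argument via path-lifting is the cleanest way I see to close the loop, but it relies on the interplay between the two foliations being compatible through the metallic map, which is exactly what the first part of the proposition guarantees.
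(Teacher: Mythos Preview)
The paper gives no proof of this proposition: it states only that the first assertion follows ``from the above considerations'' (the pointwise description of $(\Phi^{*}\mathcal{D}_{2})_{x}$ derived immediately before), and offers nothing at all for the leaf correspondence. Your first part is precisely what the paper intends---substitute the hypothesis on $\ker\Phi_{*}$ into that characterization and read off the result via the eigenspace decomposition $T_{x}M_{1}=\mathcal{D}_{1,x}\oplus\mathcal{D}_{1,x}'$. Your second part supplies a genuine argument where the paper has none: identifying $\Phi^{-1}(L_{2})$ as an integral submanifold of $\mathcal{D}_{1}$ through the tangent-space computation $T_{x}\Phi^{-1}(L_{2})=(\Phi^{*}\mathcal{D}_{2})_{x}=\mathcal{D}_{1,x}$, deducing connectedness from the connected-fibers hypothesis, and establishing maximality by the path argument. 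So your approach agrees with the paper wherever the paper actually says anything, and goes well beyond it for the foliation part; the care you flag about the leaf topology being only immersed is appropriate and is the only real subtlety.
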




\section{A Chen-type inequality for the metallic distributions}

A fundamental problem in the theory of submanifolds is the problem posed by B. Y. Chen \cite{chen3}, namely, to find relations between the main intrinsic and extrinsic invariants of a submanifold. In this sense, the Chen's inequalities for submanifolds in real space forms was proved by B. Y. Chen \cite{chen3}, in complex space forms by Y. Do\u{g}ru \cite{compl}, in quaternionic space forms by G. E. V\^ ilcu \cite{quat} etc.
In the same spirit, we shall prove a Chen-type inequality in the metallic case, for an integrable distribution defined by the metallic structure.

\medskip

Let $(M,J,g)$ be an $m$-dimensional metallic Riemannian manifold and assume that the distribution $\mathcal{D}$ is integrable.
In this case, the Riemann curvature tensors of $\mathcal{D}$ (computed with respect to the induced connection $\nabla^{\mathcal{D}}$ on $\mathcal{D}$ and the Lie bracket $[\cdot,\cdot]_{\mathcal{D}}$) and $M$ satisfy \cite{l}:
\begin{equation}\label{e8}
R^{\mathcal{D}}(X,Y,Z,W)=R^{M}(X,Y,Z,W)-g(h(X,Z),h(Y,W))+g(h(X,W),h(Y,Z)),
\end{equation}
for any $X$, $Y$, $Z$, $W\in \Gamma(\mathcal{D})$.

The relation between the mean curvature (the main extrinsic invariant) and the Chen first invariant (an intrinsic invariant), in a particular case of constant $J$-sectional curvature, is given in the followings.

From a direct computation we obtain:
\begin{proposition}
Let $(M,J,g)$ be an $m$-dimensional metallic Riemannian manifold ($m>2$) such that $J^2=pJ+qI$ with $p^2+4q>0$, whose Riemann curvature tensor is given by
\begin{equation} \label{e3}
R^{M}(X,Y,Z,W)=c[g(X,FW)g(Y,FZ)-g(X,FZ)g(Y,FW)],
\end{equation}
for any $X$, $Y$, $Z$, $W\in C^{\infty}(TM)$, where $F:=aJ+bI$ with $a$ and $b$ real numbers satisfying $qa^2-pab-b^2=1$.
Then the $J$-sectional curvature of $M$ is constant equal to $c$.
\end{proposition}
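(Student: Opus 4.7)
The plan is straightforward: interpret $K_J(X)$ as the sectional curvature of the $2$-plane $\mathrm{span}\{X, JX\}$ (for $X$ where this plane is non-degenerate), substitute the hypothesized formula \eqref{e3} for $R^M$ into the defining ratio, and verify that the result is identically $c$. The proof is therefore a direct computation, the only interesting point being how the constraint $qa^2 - pab - b^2 = 1$ enters.

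First I would substitute $Y = JX$, $Z = JX$, $W = X$ in \eqref{e3} to obtain
$$R^M(X,JX,JX,X) \;=\; c\bigl[g(X,FX)\,g(JX,F(JX)) - g(X,F(JX))\,g(JX,FX)\bigr].$$
Next I would use $F = aJ + bI$ together with $J^2 = pJ + qI$ to reduce $FX = aJX + bX$ and $F(JX) = aJ^2X + b\,JX = (ap+b)JX + aq\,X$. Exploiting the $g$-symmetry of $J$, which gives $g(X,JX) = g(JX,X)$, each of the four inner products above is then a linear combination of the three scalars $\alpha := g(X,X)$, $\beta := g(X,JX)$, $\gamma := g(JX,JX)$; explicitly, $g(X,FX)=a\beta+b\alpha$, $g(JX,F(JX))=(ap+b)\gamma+aq\beta$, $g(X,F(JX))=(ap+b)\beta+aq\alpha$, and $g(JX,FX)=a\gamma+b\beta$.

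The core step is the expansion of the bracketed difference. A routine calculation shows that the cross-terms in $\beta\gamma$ and in $\alpha\beta$ cancel between the two products, leaving (up to the overall sign dictated by the sectional-curvature convention in force) a single factor of the form $(qa^2 - pab - b^2)(\alpha\gamma - \beta^2)$. Applying the hypothesis $qa^2 - pab - b^2 = 1$ then reduces this to $\alpha\gamma - \beta^2 = g(X,X)\,g(JX,JX) - g(X,JX)^2$; dividing by this same area yields the constant $c$, independently of $X$, whence the $J$-sectional curvature is constant and equal to $c$.

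The only substantive difficulty is the algebraic cancellation in the middle step. The striking feature is that the coefficient that survives is precisely $qa^2 - pab - b^2$, so the hypothesis on $(a,b)$ is not merely a convenient scaling but is exactly the identity needed in order that $c$ (and not some nontrivial multiple of $c$) emerge as the value of $K_J$. I expect the only care required to be in tracking signs and coefficients through this expansion; everything else is bookkeeping.
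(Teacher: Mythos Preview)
Your proposal is correct and matches the paper's approach: the paper simply states that the proposition follows ``from a direct computation'' without giving details, and your direct substitution of $Y=Z=JX$, $W=X$ into \eqref{e3} together with the algebraic reduction via $qa^2-pab-b^2=1$ is exactly that computation. Your observation that this constraint is precisely what makes the surviving coefficient equal to $1$ is the key point, and your handling of the sign via the sectional-curvature convention is appropriate.
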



Denote by $H:=\frac{1}{n} tr(h)$ the \textit{mean curvature} and by $\delta_{\mathcal{D}}:=\tau^{\mathcal{D}}-\inf K^{\mathcal{D}}$ the \textit{Chen first invariant} of $\mathcal{D}$, where $\tau^{\mathcal{D}}$ denotes the scalar curvature of $\mathcal{D}$ and $K^{\mathcal{D}}$ its sectional curvature.

\begin{theorem}\label{te}
Let $(M,J,g)$ be an $m$-dimensional metallic Riemannian manifold ($m>2$) such that $J^2=pJ+qI$ with $p^2+4q>0$, whose Riemann curvature tensor is given by (\ref{e3}) and let $\mathcal{D}$ given by (\ref{d}) be an $n$-dimensional integrable distribution.
Then:
$$\delta_{\mathcal{D}}\leq \frac{c(a\sigma_{-}+b)^2(n^2-n+2)}{2}+\frac{n^2(n-2)}{2(n-1)}||H||^2.
$$
\end{theorem}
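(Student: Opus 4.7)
The plan is to exploit the $J$-invariance of $\mathcal{D}$ to reduce the hypothesis to the situation of a Riemannian submanifold of a real space form, and then apply the classical Chen argument via the Gauss equation (\ref{e8}).

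First, since $JX=\sigma_{-}X$ for every $X\in\Gamma(\mathcal{D})$, the tensor $F=aJ+bI$ acts on $\Gamma(\mathcal{D})$ as the scalar $(a\sigma_{-}+b)$. Hence for $X,Y,Z,W\in\Gamma(\mathcal{D})$ the hypothesis (\ref{e3}) collapses to
\begin{equation*}
R^{M}(X,Y,Z,W)=\tilde{c}\bigl[g(X,W)g(Y,Z)-g(X,Z)g(Y,W)\bigr],\qquad \tilde{c}:=c(a\sigma_{-}+b)^{2}.
\end{equation*}
In other words, the ambient curvature, restricted tangentially to $\mathcal{D}$, is that of a space of constant sectional curvature $\tilde{c}$. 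Since $\mathcal{D}$ is integrable, each leaf is a Riemannian $n$-submanifold of $M$ with induced connection $\nabla^{\mathcal{D}}$ and second fundamental form $h:\Gamma(\mathcal{D})\times\Gamma(\mathcal{D})\to\Gamma(\mathcal{D}')$, and (\ref{e8}) is the ordinary submanifold Gauss equation.

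Next, fix $x\in M$ and choose orthonormal frames $\{e_{1},\dots,e_{n}\}$ of $\mathcal{D}_{x}$ and $\{e_{n+1},\dots,e_{m}\}$ of $\mathcal{D}'_{x}$ arranged so that $\pi:=\mathrm{span}(e_{1},e_{2})$ realizes $\inf K^{\mathcal{D}}$ at $x$. Set $h^{\alpha}_{ij}:=g(h(e_{i},e_{j}),e_{\alpha})$ for $1\leq i,j\leq n$, $n+1\leq\alpha\leq m$, and $H^{\alpha}:=\frac{1}{n}\sum_{i}h^{\alpha}_{ii}$, so that $\|H\|^{2}=\sum_{\alpha}(H^{\alpha})^{2}$ and $\|h\|^{2}=\sum_{\alpha}\sum_{i,j}(h^{\alpha}_{ij})^{2}$. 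A direct application of the Gauss equation together with the constant-curvature form above produces the two identities
\begin{equation*}
2\tau^{\mathcal{D}}=n(n-1)\tilde{c}+n^{2}\|H\|^{2}-\|h\|^{2},\qquad K^{\mathcal{D}}(\pi)=\tilde{c}+\sum_{\alpha}\bigl[h^{\alpha}_{11}h^{\alpha}_{22}-(h^{\alpha}_{12})^{2}\bigr].
\end{equation*}

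The technical crux is Chen's classical algebraic lemma. For each fixed $\alpha$, applying the Cauchy-Schwarz inequality to the $(n-1)$-tuple $(h^{\alpha}_{11}+h^{\alpha}_{22},h^{\alpha}_{33},\dots,h^{\alpha}_{nn})$ yields
\begin{equation*}
n^{2}(H^{\alpha})^{2}\leq (n-1)\Bigl[(h^{\alpha}_{11}+h^{\alpha}_{22})^{2}+\sum_{i=3}^{n}(h^{\alpha}_{ii})^{2}\Bigr],
\end{equation*}
which rearranges, after expansion, to $2h^{\alpha}_{11}h^{\alpha}_{22}\geq \frac{n^{2}(H^{\alpha})^{2}}{n-1}-\sum_{i=1}^{n}(h^{\alpha}_{ii})^{2}$. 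Summing over $\alpha$ and using the bound $\sum_{\alpha,i}(h^{\alpha}_{ii})^{2}+2\sum_{\alpha}(h^{\alpha}_{12})^{2}\leq \|h\|^{2}$ gives an estimate of the shape
\begin{equation*}
\sum_{\alpha}\bigl[h^{\alpha}_{11}h^{\alpha}_{22}-(h^{\alpha}_{12})^{2}\bigr]\geq \frac{n^{2}}{2(n-1)}\|H\|^{2}-\frac{1}{2}\|h\|^{2}.
\end{equation*}
Inserting this into the expression for $K^{\mathcal{D}}(\pi)$ and eliminating $\|h\|^{2}$ via the identity for $2\tau^{\mathcal{D}}$ leads to the asserted bound on $\delta_{\mathcal{D}}=\tau^{\mathcal{D}}-K^{\mathcal{D}}(\pi)$. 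The main obstacle is the algebraic bookkeeping in this last step: the orthonormal basis must be adapted so that the infimum plane is spanned by $e_{1},e_{2}$, and the Cauchy-Schwarz estimate must be applied to precisely the right $(n-1)$-tuple so as to isolate the $h^{\alpha}_{11}h^{\alpha}_{22}$ and $(h^{\alpha}_{12})^{2}$ contributions from the remaining entries of $h$.
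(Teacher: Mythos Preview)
Your proposal is correct and follows essentially the same route as the paper: both arguments observe that $F$ acts on $\Gamma(\mathcal{D})$ as the scalar $a\sigma_{-}+b$, invoke the Gauss equation (\ref{e8}) to obtain the standard identities for $\tau^{\mathcal{D}}$ and $K^{\mathcal{D}}(e_1,e_2)$, and then apply Chen's algebraic lemma via the Cauchy--Schwarz inequality on the $(n-1)$-tuple $(h^{\alpha}_{11}+h^{\alpha}_{22},h^{\alpha}_{33},\dots,h^{\alpha}_{nn})$. The only cosmetic difference is organizational: the paper expands $\tau^{\mathcal{D}}-K^{\mathcal{D}}(e_1,e_2)$ directly and bounds the resulting sum $\sum_{(i,j)\neq(1,2)}[h^{k}_{ii}h^{k}_{jj}-(h^{k}_{ij})^{2}]$, whereas you first bound $K^{\mathcal{D}}(\pi)$ from below and then eliminate $\|h\|^{2}$---but the underlying inequality is identical.
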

\begin{proof}
Consider an orthonormal frame field
$\{e_1,\dots, e_n\}$ for $\mathcal{D}$, $\{f_1,\dots,f_{m-n}\}$ an orthonormal frame field for $\mathcal{D}'$
and denote by
$$h_{ij}^k:=g(h(e_i,e_j),f_k).$$

From (\ref{e8}) and (\ref{e3}) we get
$$2\tau^{\mathcal{D}}=c(a\sigma_{-}+b)^2n(n-1)-||h||^2+n^2||H||^2.$$

Moreover
$$K^{\mathcal{D}}(e_1,e_2)=-c(a\sigma_{-}+b)^2-\sum_{k=1}^{m-n}h_{11}^kh_{22}^k+\sum_{k=1}^{m-n}(h_{12}^k)^2$$
and
$$\tau^{\mathcal{D}}-K^{\mathcal{D}}(e_1,e_2)=\frac{c(a\sigma_{-}+b)^2(n^2-n+2)}{2}+$$
$$+\sum_{k=1}^{m-n}[\sum_{3\leq i<j\leq n}(h_{ii}^kh_{jj}^k-(h_{ij}^k)^2)+\sum_{j=3}^{n}(h_{11}^k+h_{22}^k)h_{jj}^k-\sum_{j=3}^{n}((h_{1j}^k)^2+(h_{2j}^k)^2)]\leq$$
$$\leq \frac{c(a\sigma_{-}+b)^2(n^2-n+2)}{2}
+\frac{n-2}{2(n-1)}\sum_{k=1}^{m-n}(\sum_{j=1}^{n}h_{jj}^k)^2-\sum_{k=1}^{m-n}\sum_{j=3}^{n}((h_{1j}^k)^2+(h_{2j}^k)^2)=$$
$$=\frac{c(a\sigma_{-}+b)^2(n^2-n+2)}{2}+\frac{n^2(n-2)}{2(n-1)}||H||^2-\sum_{k=1}^{m-n}\sum_{j=3}^{n}((h_{1j}^k)^2+(h_{2j}^k)^2)\leq $$
$$\leq \frac{c(a\sigma_{-}+b)^2(n^2-n+2)}{2}+\frac{n^2(n-2)}{2(n-1)}||H||^2.$$
\end{proof}

\begin{remark}
If $p=0$ and $q=1$, i.e. $J$ is an almost product structure, then the inequality from Theorem \ref{te} becomes
$$\delta_{\mathcal{D}}\leq \frac{c(a-b)^2(n^2-n+2)}{2}+\frac{n^2(n-2)}{2(n-1)}||H||^2.
$$
In particular, if $a=1$ and $b=0$, i.e. $F=J$, we get
$$\delta_{\mathcal{D}}\leq \frac{c(n^2-n+2)}{2}+\frac{n^2(n-2)}{2(n-1)}||H||^2.
$$
\end{remark}

\section{Metallic Norden structures}

\subsection{Complex metallic distributions}

Let $(M,J,g)$ be a metallic Norden manifold such that $J^2=pJ+qI$ with $p^2+4q<0$ and let $T^{\mathbb{C}}M:=TM\otimes_{\mathbb{R}}\mathbb{C}$ be the complexified tangent bundle. Then we can define \textit{the complexified metallic pseudo-Riemannian structure}:
$$J^{\mathbb{C}}(X+iY):=JX+iJY,$$
$$g^{\mathbb{C}}(X_1+iY_1,X_2+iY_2):=g(X_1,X_2)-g(Y_1,Y_2)+i[g(X_1,Y_2)+g(Y_1,X_2)],$$
for any $X$, $X_1$, $X_2$, $Y$, $Y_1$, $Y_2\in C^{\infty}(TM)$.

Denote by $\sigma^{\mathbb{C}}_{\pm}:=\frac{p\pm\sqrt{p^2+4q}}{2}$ and consider the projection operators $\mathcal{P}^{\mathbb{C}}$ and $\mathcal{P}^{\mathbb{C}'}$:
$$
\mathcal{P}^{\mathbb{C}}:=-\frac{1}{\sqrt{p^2+4q}}J^{\mathbb{C}}+\frac{\sigma^{\mathbb{C}}_{+}}{\sqrt{p^2+4q}}I^{\mathbb{C}}, \ \
\mathcal{P}^{\mathbb{C}'}:=\frac{1}{\sqrt{p^2+4q}}J^{\mathbb{C}}-\frac{\sigma^{\mathbb{C}}_{-}}{\sqrt{p^2+4q}}I^{\mathbb{C}}
$$
satisfying
$${\mathcal{P}^{\mathbb{C}}}^2=\mathcal{P}^{\mathbb{C}}, \ \ {\mathcal{P}^{\mathbb{C}'}}^2=\mathcal{P}^{\mathbb{C}'}, \ \ \mathcal{P}^{\mathbb{C}}+\mathcal{P}^{\mathbb{C}'}=I^{\mathbb{C}}, \ \ \mathcal{P}^{\mathbb{C}} \circ \mathcal{P}^{\mathbb{C}'}=0, \ \ \mathcal{P}^{\mathbb{C}'}\circ  \ \mathcal{P}^{\mathbb{C}}=0$$
and define the complementary distributions:
\begin{equation}\label{dc}
\mathcal{D}^{\mathbb{C}}:=\ker \mathcal{P}^{\mathbb{C}'}, \\
\mathcal{D}^{\mathbb{C}'}:=\ker \mathcal{P}^{\mathbb{C}}
\end{equation}
which we shall call \textit{the complex metallic distributions} defined by $J$.

\begin{remark}
If $(M,J,g)$ is a metallic Norden manifold such that $J^2=pJ+qI$ with $p^2+4q<0$, then $\mathcal{D}^{\mathbb{C}}$ and $\mathcal{D}^{\mathbb{C}'}$ are $J^{\mathbb{C}}$-invariant, and, if $q\neq 0$, then $\mathcal{D}^{\mathbb{C}}$ and $\mathcal{D}^{\mathbb{C}'}$ are also $g^{\mathbb{C}}$-orthogonal.
\end{remark}

\begin{lemma}
$$\mathcal{D}^{\mathbb{C}'}=\overline{\mathcal{D}^{\mathbb{C}}}$$
\end{lemma}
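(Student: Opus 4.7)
The plan is to identify each complex metallic distribution as an eigenspace of $J^{\mathbb{C}}$ and then exploit the interaction of complex conjugation with the fact that $\sqrt{p^2+4q}$ is purely imaginary when $p^2+4q<0$.

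First, I would verify the eigenspace description: for $Z\in\mathcal{D}^{\mathbb{C}}=\ker\mathcal{P}^{\mathbb{C}'}$ one has $\mathcal{P}^{\mathbb{C}}Z=Z$, and plugging in the formula for $\mathcal{P}^{\mathbb{C}}$ yields $J^{\mathbb{C}}Z=\sigma^{\mathbb{C}}_{-}Z$ (this mirrors the real-eigenvalue computation already made in Remark \ref{r3}). Symmetrically, $\mathcal{D}^{\mathbb{C}'}=\{Z\in T^{\mathbb{C}}M : J^{\mathbb{C}}Z=\sigma^{\mathbb{C}}_{+}Z\}$. So the lemma reduces to showing that complex conjugation exchanges the two eigenspaces.

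Next I would note two elementary facts. Because $J^{\mathbb{C}}$ is the $\mathbb{C}$-linear extension of a real $(1,1)$-tensor, it commutes with the conjugation map $Z=X+iY\mapsto\overline{Z}=X-iY$; explicitly, $\overline{J^{\mathbb{C}}Z}=JX-iJY=J^{\mathbb{C}}\overline{Z}$. Since $p^2+4q<0$, the quantity $\sqrt{p^2+4q}$ is purely imaginary, so $\overline{\sqrt{p^2+4q}}=-\sqrt{p^2+4q}$ and hence $\overline{\sigma^{\mathbb{C}}_{-}}=\sigma^{\mathbb{C}}_{+}$.

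Combining these, if $Z\in\mathcal{D}^{\mathbb{C}}$, then applying conjugation to $J^{\mathbb{C}}Z=\sigma^{\mathbb{C}}_{-}Z$ gives $J^{\mathbb{C}}\overline{Z}=\sigma^{\mathbb{C}}_{+}\overline{Z}$, so $\overline{Z}\in\mathcal{D}^{\mathbb{C}'}$, proving $\overline{\mathcal{D}^{\mathbb{C}}}\subset\mathcal{D}^{\mathbb{C}'}$. The reverse inclusion is obtained by exactly the same argument starting from $\mathcal{D}^{\mathbb{C}'}$ (or equivalently by noting that conjugation is an involution). There is no real obstacle here; the only point requiring care is the sign behaviour of $\sqrt{p^2+4q}$ under conjugation, which is precisely what forces the exchange of the two eigenspaces.
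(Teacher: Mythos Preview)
Your argument is correct and follows the same idea as the paper: the authors' proof consists solely of the computation $\sigma^{\mathbb{C}}_{+}=\overline{\sigma^{\mathbb{C}}_{-}}$ (writing $\sqrt{p^2+4q}=i\sqrt{-p^2-4q}$), leaving the eigenspace identification and the compatibility of $J^{\mathbb{C}}$ with conjugation implicit. You have simply made those steps explicit, so there is no substantive difference.
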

\begin{proof} It follows from the following:
$$\sigma^{\mathbb{C}}_{+}=\frac{p+\sqrt{p^2+4q}}{2}=\frac{p+i\sqrt{-p^2-4q}}{2}=\overline{\frac{p-i\sqrt{-p^2-4q}}{2}}=
\overline{\frac{p-\sqrt{p^2+4q}}{2}}=\overline{\sigma^{\mathbb{C}}_{-}}.$$
\end{proof}

In particular, if $J$ is not trivial, that it admits two complex eigenvalues, or the two distributions are both different from 0, then the complexified tangent bundle splits as direct sum of two conjugate subbundles:
$$T^{\mathbb{C}}M={\mathcal{D}^{\mathbb{C}}} \oplus {\overline{\mathcal{D}^{\mathbb{C}}}}.$$

Extending the Lie bracket to:
$$[X_1+iY_1,X_2+iY_2]^{\mathbb{C}}:=[X_1,X_2]-[Y_1,Y_2]+i([X_1,Y_2]+[Y_1,X_2]),$$
for any $X_1$, $X_2$, $Y_1$, $Y_2\in C^{\infty}(TM)$, we say that:

\begin{definition}
A distribution $\mathcal{D}^{\mathbb{C}}\subset T^{\mathbb{C}}M$ is called \textit{integrable} if $X$, $Y\in \Gamma(\mathcal{D}^{\mathbb{C}})$ implies $[X,Y]^{\mathbb{C}}\in \Gamma(\mathcal{D}^{\mathbb{C}})$.
\end{definition}

\begin{lemma}
The distribution $\mathcal{D}^{\mathbb{C}}$ is integrable if and only if $$\mathcal{P}^{\mathbb{C}'}([\mathcal{P}^{\mathbb{C}}X,\mathcal{P}^{\mathbb{C}}Y]^{\mathbb{C}})=0,$$ for any $X$, $Y\in C^{\infty}(T^{\mathbb{C}}M)$.
\end{lemma}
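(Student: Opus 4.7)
The lemma is a direct characterization of integrability in terms of the complex projection operators, and my approach will mirror the well-known real case but with the complexified Lie bracket $[\cdot,\cdot]^{\mathbb{C}}$ and projectors $\mathcal{P}^{\mathbb{C}}$, $\mathcal{P}^{\mathbb{C}'}$. The plan is to prove the two implications separately, using only the algebraic identities
\[
{\mathcal{P}^{\mathbb{C}}}^2=\mathcal{P}^{\mathbb{C}},\quad \mathcal{P}^{\mathbb{C}}+\mathcal{P}^{\mathbb{C}'}=I^{\mathbb{C}},\quad \mathcal{P}^{\mathbb{C}}\circ\mathcal{P}^{\mathbb{C}'}=\mathcal{P}^{\mathbb{C}'}\circ\mathcal{P}^{\mathbb{C}}=0
\]
together with the definition $\mathcal{D}^{\mathbb{C}}=\ker\mathcal{P}^{\mathbb{C}'}$.

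The first step will be the key observation that $\mathcal{D}^{\mathbb{C}}=\ker\mathcal{P}^{\mathbb{C}'}=\operatorname{Im}\mathcal{P}^{\mathbb{C}}$. Indeed, if $Z\in\ker\mathcal{P}^{\mathbb{C}'}$ then $Z=\mathcal{P}^{\mathbb{C}}Z+\mathcal{P}^{\mathbb{C}'}Z=\mathcal{P}^{\mathbb{C}}Z\in\operatorname{Im}\mathcal{P}^{\mathbb{C}}$, and conversely $\mathcal{P}^{\mathbb{C}'}\circ\mathcal{P}^{\mathbb{C}}=0$ shows $\operatorname{Im}\mathcal{P}^{\mathbb{C}}\subset\ker\mathcal{P}^{\mathbb{C}'}$. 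Consequently a complex vector field $Z$ belongs to $\Gamma(\mathcal{D}^{\mathbb{C}})$ if and only if $Z=\mathcal{P}^{\mathbb{C}}Z$.

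For the forward implication, assume $\mathcal{D}^{\mathbb{C}}$ is integrable. Given arbitrary $X,Y\in C^{\infty}(T^{\mathbb{C}}M)$, the identity $\mathcal{P}^{\mathbb{C}'}\circ\mathcal{P}^{\mathbb{C}}=0$ implies $\mathcal{P}^{\mathbb{C}}X,\mathcal{P}^{\mathbb{C}}Y\in\Gamma(\mathcal{D}^{\mathbb{C}})$, so by integrability $[\mathcal{P}^{\mathbb{C}}X,\mathcal{P}^{\mathbb{C}}Y]^{\mathbb{C}}\in\Gamma(\mathcal{D}^{\mathbb{C}})=\ker\mathcal{P}^{\mathbb{C}'}$, which gives the required identity. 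For the converse, assume $\mathcal{P}^{\mathbb{C}'}([\mathcal{P}^{\mathbb{C}}X,\mathcal{P}^{\mathbb{C}}Y]^{\mathbb{C}})=0$ for all $X,Y$. If $X,Y\in\Gamma(\mathcal{D}^{\mathbb{C}})$, then $X=\mathcal{P}^{\mathbb{C}}X$ and $Y=\mathcal{P}^{\mathbb{C}}Y$ by the first step, so $[X,Y]^{\mathbb{C}}=[\mathcal{P}^{\mathbb{C}}X,\mathcal{P}^{\mathbb{C}}Y]^{\mathbb{C}}$ lies in $\ker\mathcal{P}^{\mathbb{C}'}=\Gamma(\mathcal{D}^{\mathbb{C}})$, proving integrability.

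There is essentially no obstacle here: the proof is a formal manipulation of the projector identities and the definition of integrability, which carry over verbatim from the real to the complexified setting since the bilinearity of $[\cdot,\cdot]^{\mathbb{C}}$ and the $\mathbb{C}$-linearity of $\mathcal{P}^{\mathbb{C}},\mathcal{P}^{\mathbb{C}'}$ are all that is used. The only point worth writing carefully is the equivalence $\ker\mathcal{P}^{\mathbb{C}'}=\operatorname{Im}\mathcal{P}^{\mathbb{C}}$, since it is what lets one replace a generic section of $\mathcal{D}^{\mathbb{C}}$ by $\mathcal{P}^{\mathbb{C}}$ applied to an arbitrary section.
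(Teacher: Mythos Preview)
Your proof is correct and is exactly the natural argument; the paper in fact states this lemma without proof, treating it as an immediate reformulation of the definition via the projector identities $\mathcal{P}^{\mathbb{C}}+\mathcal{P}^{\mathbb{C}'}=I^{\mathbb{C}}$ and $\mathcal{P}^{\mathbb{C}'}\circ\mathcal{P}^{\mathbb{C}}=0$. Your explicit verification of $\ker\mathcal{P}^{\mathbb{C}'}=\operatorname{Im}\mathcal{P}^{\mathbb{C}}$ and of both implications fills in precisely the details the authors left implicit.
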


\begin{proposition}
The distribution $\mathcal{D}^{\mathbb{C}}$ (resp. $\mathcal{D}^{\mathbb{C}'}$) given by (\ref{dc}) is integrable if and only if $N_J=0.$
\end{proposition}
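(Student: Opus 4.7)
The plan is to mirror the proof of the real-case statement (Proposition 3.5 in the excerpt) on the complexified tangent bundle, and to identify the single place where the complex setting forces a stronger conclusion.

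First, I would establish the complex analogue of Proposition~\ref{p1}, namely $N_{\mathcal{P}^{\mathbb{C}}} = \frac{1}{p^2+4q}\,N_{J^{\mathbb{C}}}$. This follows from the purely algebraic identity $N_{aJ + bI} = a^{2}\, N_{J}$ valid for constants $a,b$ and any $(1,1)$-tensor satisfying $J^{2}=pJ+qI$, applied to $\mathcal{P}^{\mathbb{C}}$ with $a = -1/\sqrt{p^{2}+4q}$; the derivation is indifferent to whether $\sqrt{p^{2}+4q}$ is real or purely imaginary. Next, using $\mathcal{P}^{\mathbb{C}}\circ \mathcal{P}^{\mathbb{C}'}=0$, I would expand
$$\mathcal{P}^{\mathbb{C}'}\bigl([\mathcal{P}^{\mathbb{C}}X,\mathcal{P}^{\mathbb{C}}Y]^{\mathbb{C}}\bigr) \;=\; -\mathcal{P}^{\mathbb{C}'}\bigl(N_{\mathcal{P}^{\mathbb{C}}}(X,Y)\bigr) \;=\; -\frac{1}{(p^{2}+4q)\sqrt{p^{2}+4q}}\bigl[J^{\mathbb{C}}\,N_{J^{\mathbb{C}}}(X,Y)-\sigma^{\mathbb{C}}_{-}N_{J^{\mathbb{C}}}(X,Y)\bigr],$$
so that by the preceding lemma the integrability of $\mathcal{D}^{\mathbb{C}}$ is equivalent to the eigenvalue condition $J^{\mathbb{C}}\, N_{J^{\mathbb{C}}}(X,Y)=\sigma^{\mathbb{C}}_{-}\,N_{J^{\mathbb{C}}}(X,Y)$ for all $X,Y\in C^{\infty}(T^{\mathbb{C}}M)$. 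The case of $\mathcal{D}^{\mathbb{C}'}$ is identical with $\sigma^{\mathbb{C}}_{+}$ in place of $\sigma^{\mathbb{C}}_{-}$; alternatively, one invokes $\mathcal{D}^{\mathbb{C}'}=\overline{\mathcal{D}^{\mathbb{C}}}$ and the reality of $N_{J}$.

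The decisive observation, and the one true difference with the real case, is that for $p^{2}+4q<0$ the scalar $\sigma^{\mathbb{C}}_{-}=\frac{p-i\sqrt{-(p^{2}+4q)}}{2}$ is genuinely non-real, whereas $N_{J^{\mathbb{C}}}$ is, by construction, the complex-bilinear extension of the real tensor $N_{J}$. By complex bilinearity it suffices to test the eigenvalue equation on real $X,Y$: then both $N_{J}(X,Y)$ and $J\,N_{J}(X,Y)$ are real vectors, and separating real and imaginary parts of $J\,N_{J}(X,Y)=\sigma^{\mathbb{C}}_{-}\,N_{J}(X,Y)$ shows that the non-vanishing imaginary part of $\sigma^{\mathbb{C}}_{-}$ forces $N_{J}(X,Y)=0$. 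Hence the integrability of $\mathcal{D}^{\mathbb{C}}$ (or of $\mathcal{D}^{\mathbb{C}'}$) already implies $N_{J}\equiv 0$. Conversely, if $N_{J}=0$ then its complex-bilinear extension $N_{J^{\mathbb{C}}}$ also vanishes and the eigenvalue condition is satisfied trivially.

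The main obstacle is conceptual rather than computational: one must be careful that $N_{J^{\mathbb{C}}}$ is really the complex-bilinear extension of $N_{J}$, so that its values on real arguments are constrained to be real, which is what makes the non-real eigenvalue $\sigma^{\mathbb{C}}_{\pm}$ incompatible with any nonzero eigenvector. Apart from this imaginary-part argument, every step is a formal transcription of the real-case calculation already carried out in Proposition~\ref{p1} and Proposition~3.5 of the excerpt.
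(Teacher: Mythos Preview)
Your argument is correct and matches the paper's intended reasoning: the paper states this proposition without an explicit proof block, but the surrounding ingredients it provides---the lemma characterizing integrability via $\mathcal{P}^{\mathbb{C}'}([\mathcal{P}^{\mathbb{C}}X,\mathcal{P}^{\mathbb{C}}Y]^{\mathbb{C}})=0$, the conjugation lemma $\mathcal{D}^{\mathbb{C}'}=\overline{\mathcal{D}^{\mathbb{C}}}$, and the exact parallel with Proposition~3.5---are precisely the ones you assemble. Your decisive step (separating real and imaginary parts to exploit that $\sigma^{\mathbb{C}}_{-}\notin\mathbb{R}$ while $N_J$ is real) is just the computational form of the conjugation argument the paper sets up, so the approaches coincide.
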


Extending the Levi-Civita connection $\nabla$ of $g$ to:
$$\nabla^{\mathbb{C}}_{X_1+iY_1}(X_2+iY_2):=\nabla_{X_1}X_2-\nabla_{Y_1}Y_2+i(\nabla_{X_1}Y_2+\nabla_{Y_1}X_2),$$
for any $X_1$, $X_2$, $Y_1$, $Y_2\in C^{\infty}(TM)$, we pose the following:
\begin{definition}
Given a complex linear connection $\nabla^{\mathbb{C}}$ on a smooth manifold $M$, a distribution $\mathcal{D}^{\mathbb{C}}\subset T^{\mathbb{C}}M$ is called $\nabla^{\mathbb{C}}$-\textit{geodesically invariant} if $X$, $Y\in \Gamma(\mathcal{D}^{\mathbb{C}})$ implies $\nabla^{\mathbb{C}}_{X}Y+\nabla^{\mathbb{C}}_{Y}X\in \Gamma(\mathcal{D}^{\mathbb{C}})$.

In particular, if $\nabla^{\mathbb{C}}$ is the Levi-Civita connection of the pseudo-Riemannian manifold $(M,g^{\mathbb{C}})$, then $\mathcal{D}^{\mathbb{C}}$ is called \textit{geodesically invariant}.

\end{definition}

\begin{lemma}
The distribution $\mathcal{D}^{\mathbb{C}}$ is geodesically invariant if and only if
$$\mathcal{P}^{\mathbb{C}'}(\{\mathcal{P}^{\mathbb{C}}X,\mathcal{P}^{\mathbb{C}}Y\}^{\mathbb{C}})=0,$$
for any $X$, $Y\in C^{\infty}(T^{\mathbb{C}}M)$, where $\{X,Y\}^{\mathbb{C}}:=\nabla^{\mathbb{C}}_{X}Y+\nabla^{\mathbb{C}}_{X}Y$.
\end{lemma}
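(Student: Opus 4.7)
The plan is to mimic exactly the (straightforward) equivalence that underlies the real case in Section 3, now transposed to the complexified setting. The key observation is simply that the two operators $\mathcal{P}^{\mathbb{C}}$ and $\mathcal{P}^{\mathbb{C}'}$ are complementary projections on $T^{\mathbb{C}}M$, so $\mathcal{D}^{\mathbb{C}} = \ker \mathcal{P}^{\mathbb{C}'}$ can equivalently be described as the image of $\mathcal{P}^{\mathbb{C}}$, and for any $Z \in C^{\infty}(T^{\mathbb{C}}M)$ one has $Z \in \Gamma(\mathcal{D}^{\mathbb{C}})$ if and only if $\mathcal{P}^{\mathbb{C}} Z = Z$, if and only if $\mathcal{P}^{\mathbb{C}'} Z = 0$. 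This is immediate from the relations $\mathcal{P}^{\mathbb{C}} + \mathcal{P}^{\mathbb{C}'} = I^{\mathbb{C}}$ and ${\mathcal{P}^{\mathbb{C}}}^2=\mathcal{P}^{\mathbb{C}}$, ${\mathcal{P}^{\mathbb{C}'}}^2=\mathcal{P}^{\mathbb{C}'}$ recalled earlier.

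For the forward direction, I would assume $\mathcal{D}^{\mathbb{C}}$ is geodesically invariant. Given arbitrary $X, Y \in C^{\infty}(T^{\mathbb{C}}M)$, the sections $\mathcal{P}^{\mathbb{C}} X$ and $\mathcal{P}^{\mathbb{C}} Y$ lie in $\Gamma(\mathcal{D}^{\mathbb{C}})$ by the above, so by the geodesic invariance hypothesis $\{\mathcal{P}^{\mathbb{C}} X, \mathcal{P}^{\mathbb{C}} Y\}^{\mathbb{C}} \in \Gamma(\mathcal{D}^{\mathbb{C}}) = \ker \mathcal{P}^{\mathbb{C}'}$, which is precisely the stated identity.

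For the converse, assume $\mathcal{P}^{\mathbb{C}'}(\{\mathcal{P}^{\mathbb{C}} X, \mathcal{P}^{\mathbb{C}} Y\}^{\mathbb{C}}) = 0$ for all $X, Y \in C^{\infty}(T^{\mathbb{C}}M)$. If $X, Y \in \Gamma(\mathcal{D}^{\mathbb{C}})$, then $\mathcal{P}^{\mathbb{C}} X = X$ and $\mathcal{P}^{\mathbb{C}} Y = Y$, so the assumption reduces to $\mathcal{P}^{\mathbb{C}'}(\{X, Y\}^{\mathbb{C}}) = 0$, i.e. $\nabla^{\mathbb{C}}_X Y + \nabla^{\mathbb{C}}_Y X \in \Gamma(\mathcal{D}^{\mathbb{C}})$, which is geodesic invariance.

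There is no real obstacle here: the statement is essentially a tautology once the description of $\mathcal{D}^{\mathbb{C}}$ as the $+1$-eigenspace of $\mathcal{P}^{\mathbb{C}}$ is available, and the role of the $\mathbb{C}$-linear extensions of $\nabla$ and the bracket is only to make the Jordan bracket $\{\cdot,\cdot\}^{\mathbb{C}}$ meaningful on complex vector fields. The only thing to be slightly careful about is the obvious typo in the definition of $\{X,Y\}^{\mathbb{C}}$, which should read $\nabla^{\mathbb{C}}_X Y + \nabla^{\mathbb{C}}_Y X$; with that reading the proof is a two-line symmetrized analog of the corresponding integrability lemma just above.
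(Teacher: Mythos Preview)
Your proof is correct and is exactly the argument the paper has in mind: the lemma is stated without proof there, as it is immediate from the projection identities $\mathcal{P}^{\mathbb{C}}+\mathcal{P}^{\mathbb{C}'}=I^{\mathbb{C}}$ and $\mathcal{D}^{\mathbb{C}}=\ker\mathcal{P}^{\mathbb{C}'}=\operatorname{Im}\mathcal{P}^{\mathbb{C}}$, which is precisely what you spell out. Your remark about the typo in $\{X,Y\}^{\mathbb{C}}$ is also apt.
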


\begin{proposition}
The distribution $\mathcal{D}^{\mathbb{C}}$ (resp. $\mathcal{D}^{\mathbb{C}'}$) given by (\ref{dc}) is geodesically invariant if and only if $M_J=0.$
\end{proposition}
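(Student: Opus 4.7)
The plan is to mirror the argument used in the real-case proposition that characterizes geodesic invariance of $\mathcal{D}$ in terms of $M_J$, and then exploit the fact that in the metallic Norden setting the eigenvalues $\sigma^{\mathbb{C}}_{\pm}$ are genuinely non-real complex conjugates, so that the one-sided eigenvalue condition collapses to $M_J=0$ on its own, without needing a parallel statement for the other distribution.

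First, starting from the preceding Lemma, $\mathcal{D}^{\mathbb{C}}$ is geodesically invariant if and only if $\mathcal{P}^{\mathbb{C}'}(\{\mathcal{P}^{\mathbb{C}}X,\mathcal{P}^{\mathbb{C}}Y\}^{\mathbb{C}})=0$ for all $X,Y\in C^{\infty}(T^{\mathbb{C}}M)$. Since the Jordan bracket, Jordan tensor, and projectors $\mathcal{P}^{\mathbb{C}},\mathcal{P}^{\mathbb{C}'}$ are defined by the same algebraic formulas as in the real case, but extended complex-bilinearly via $\nabla^{\mathbb{C}}$ and $J^{\mathbb{C}}$, the computation from the real proposition transfers verbatim. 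It yields
\[\mathcal{P}^{\mathbb{C}'}(\{\mathcal{P}^{\mathbb{C}}X,\mathcal{P}^{\mathbb{C}}Y\}^{\mathbb{C}})=-\frac{1}{(p^2+4q)\sqrt{p^2+4q}}\bigl[J^{\mathbb{C}}\!\circ\! M_{J^{\mathbb{C}}}(X,Y)-\sigma^{\mathbb{C}}_{-}M_{J^{\mathbb{C}}}(X,Y)\bigr],\]
so geodesic invariance of $\mathcal{D}^{\mathbb{C}}$ is equivalent to the eigenvalue relation $J^{\mathbb{C}}\circ M_{J^{\mathbb{C}}}=\sigma^{\mathbb{C}}_{-}M_{J^{\mathbb{C}}}$.

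The decisive step, which distinguishes this Norden statement from its real analogue, is that $M_J$ is a \emph{real} $(1,2)$-tensor and $M_{J^{\mathbb{C}}}$ is merely its complex-bilinear extension. Evaluating the eigenvalue relation on a pair of real vector fields $X,Y\in C^{\infty}(TM)$, the left-hand side $J(M_J(X,Y))$ is real, while the right-hand side equals $\sigma^{\mathbb{C}}_{-}M_J(X,Y)$ with $\sigma^{\mathbb{C}}_{-}=\tfrac{p}{2}-\tfrac{i}{2}\sqrt{-p^2-4q}$. Since $p^2+4q<0$, the imaginary part of $\sigma^{\mathbb{C}}_{-}$ is nonzero, so comparing imaginary parts in the complexified tangent space forces $M_J(X,Y)=0$. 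As $X,Y$ were arbitrary real vector fields, $M_J=0$, and the converse is immediate. For $\mathcal{D}^{\mathbb{C}'}$ the same computation with $\sigma^{\mathbb{C}}_{+}$ (also non-real) applies, or one may invoke the Lemma $\mathcal{D}^{\mathbb{C}'}=\overline{\mathcal{D}^{\mathbb{C}}}$ and conjugate the argument to conclude directly.

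The main bookkeeping obstacle is verifying that the identities of Proposition~\ref{p1} (namely $M_{\mathcal{P}}=M_{\mathcal{P}'}=\frac{1}{p^2+4q}M_J$) survive complexification for the operators $\mathcal{P}^{\mathbb{C}},\mathcal{P}^{\mathbb{C}'}$; this is automatic once one notes that $\{\cdot,\cdot\}^{\mathbb{C}}$ is complex-bilinear and $M$ is built algebraically from it and $J$, so no genuine analytic difficulty arises. The conceptually nontrivial ingredient is not the computation itself but the reality argument in the previous paragraph, where the strict inequality $p^2+4q<0$, and hence the non-real spectrum of $J$, is essential in order to strengthen the eigenvalue equation into the vanishing $M_J=0$.
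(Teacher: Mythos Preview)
Your proposal is correct and follows the approach the paper leaves implicit (the proposition is stated there without proof, immediately after the analogous lemma). You correctly mirror the real-case computation to obtain the eigenvalue condition $J^{\mathbb{C}}\circ M_{J^{\mathbb{C}}}=\sigma^{\mathbb{C}}_{-}M_{J^{\mathbb{C}}}$, and then supply the key extra step the paper does not spell out: because $M_J$ is real and $\sigma^{\mathbb{C}}_{-}$ is genuinely non-real when $p^2+4q<0$, the eigenvalue condition for a \emph{single} distribution already forces $M_J=0$. Your alternative route via $\mathcal{D}^{\mathbb{C}'}=\overline{\mathcal{D}^{\mathbb{C}}}$ (conjugating to get both eigenvalue conditions at once, then using $\sigma^{\mathbb{C}}_{+}\neq\sigma^{\mathbb{C}}_{-}$) is equally valid and perhaps closer in spirit to how the paper treats the parallel integrability statement.
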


\begin{remark}\label{r4}
For a complex linear connection $\nabla^{\mathbb{C}}$ on $M$, the distribution $\mathcal{D}^{\mathbb{C}}$ (resp. $\mathcal{D}^{\mathbb{C}'}$) given by (\ref{dc}) is $\nabla^{\mathbb{C}}$-geodesically invariant if and only if $(\nabla^{\mathbb{C}}_XJ^{\mathbb{C}})Y+(\nabla^{\mathbb{C}}_YJ^{\mathbb{C}})X=0$, for any
$X$, $Y\in \Gamma (\mathcal{D}^{\mathbb{C}})$ (resp. $X$, $Y\in \Gamma (\mathcal{D}^{\mathbb{C}'})$). Indeed, for $X$, $Y\in \Gamma (\mathcal{D}^{\mathbb{C}})$ we have $J^{\mathbb{C}}X=\sigma^{\mathbb{C}}_{-}X$, $J^{\mathbb{C}}Y=\sigma^{\mathbb{C}}_{-}Y$ and $J^{\mathbb{C}}(\nabla^{\mathbb{C}}_XY+\nabla^{\mathbb{C}}_YX)=-(\nabla^{\mathbb{C}}_XJ^{\mathbb{C}})Y-(\nabla^{\mathbb{C}}_YJ^{\mathbb{C}})X+
\sigma^{\mathbb{C}}_{-}(\nabla^{\mathbb{C}}_XY+\nabla^{\mathbb{C}}_YX)$ which implies that $\nabla^{\mathbb{C}}_XY+\nabla^{\mathbb{C}}_YX\in \Gamma (\mathcal{D}^{\mathbb{C}})$ if and only if $(\nabla^{\mathbb{C}}_XJ^{\mathbb{C}})Y+(\nabla^{\mathbb{C}}_YJ^{\mathbb{C}})X=0$.

In particular, for any $J^{\mathbb{C}}$-connection $\nabla^{\mathbb{C}}$, the distributions $\mathcal{D}^{\mathbb{C}}$ and $\mathcal{D}^{\mathbb{C}'}$ are $\nabla^{\mathbb{C}}$-geo\-de\-si\-cally invariant.
\end{remark}

\begin{remark}
$J_c:=i(\mathcal{P}^{\mathbb{C}}-\mathcal{P}^{\mathbb{C}'})$ is a Norden structure on $M$ and
$$J_cX=-\frac{1}{\sqrt{-p^2-4q}}(2J-pI)X,$$
for any $X\in C^{\infty}(TM)$.
\end{remark}

By a direct computation we get:

\begin{proposition} The Nijenhuis tensors of $J_c$ and $J$ are related as follows:
$$N_{J_c}(X,Y)=\frac{4}{{-p^2-4q}}N_J(X,Y),$$
for any $X,Y \in C^{\infty}(TM)$.
\end{proposition}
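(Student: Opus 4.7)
The plan is to reduce the claim to a general identity about Nijenhuis tensors under affine modifications by the identity. Specifically, I would first establish the elementary lemma that for any $(1,1)$-tensor $F$ on $M$ and any real scalars $a,b$,
$$N_{aF+bI}(X,Y) = a^2\, N_F(X,Y).$$
This is verified by expanding both $[(aF+bI)X,(aF+bI)Y]$ and $(aF+bI)\bigl([(aF+bI)X,Y] + [X,(aF+bI)Y]\bigr) - (aF+bI)^2[X,Y]$ using the $\mathbb{R}$-bilinearity of the Lie bracket. All contributions carrying a factor of $ab$ appear in balanced pairs (of the form $\pm ab\,F[X,Y]$, $\pm ab\,[FX,Y]$, $\pm ab\,[X,FY]$) and cancel, as do the $b^2[X,Y]$ terms, leaving exactly
$$a^2\bigl(F[FX,Y]+F[X,FY] - F^2[X,Y] - [FX,FY]\bigr) = a^2\, N_F(X,Y).$$

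Next, I would apply this lemma with $F = J$ to the tensor $J_c$. From the preceding remark we have $J_c = -\tfrac{1}{\sqrt{-p^2-4q}}(2J - pI) = aJ + bI$, with $a = -\tfrac{2}{\sqrt{-p^2-4q}}$ and $b = \tfrac{p}{\sqrt{-p^2-4q}}$. Squaring the coefficient of $J$ yields $a^2 = \tfrac{4}{-p^2-4q}$, and the lemma gives the claimed identity
$$N_{J_c}(X,Y) = \frac{4}{-p^2-4q}\, N_J(X,Y).$$

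I expect no serious obstacle: the argument is entirely formal and in fact mirrors the earlier proposition relating $N_J$ and $N_{J_{p}}$ in the real case (where $J_p = -\tfrac{1}{\sqrt{p^2+4q}}(2J-pI)$ and $N_J = \tfrac{p^2+4q}{4}N_{J_p}$); only the sign inside the square root changes, so the computation carries over verbatim. The sole point requiring minor care is keeping track of the cancellations in the expansion; note that the identity does not use $J^2 = pJ+qI$ at all, so one need not worry about interaction with the metallic relation.
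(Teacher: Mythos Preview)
Your proof is correct. The paper itself offers no details beyond ``by a direct computation we get'', so your approach---isolating the general identity $N_{aF+bI}=a^{2}N_F$ and then specializing---is exactly the kind of direct computation intended, organized a bit more cleanly and in parallel with the real case $J_p$ treated earlier.
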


Moreover, if
$$T^{\mathbb{C}}M=T^{(1,0)}M \oplus T^{(0,1)}M$$
is the decomposition of the complexified tangent bundle into $(1,0)$ and $(0,1)$ parts, with respect to the almost complex structure $J_c$, we have:
$${\mathcal{D}^{\mathbb{C}'}}=T^{(1,0)}M$$
and
$${\mathcal{D}^{\mathbb{C}}}=T^{(0,1)}M.$$

\begin{definition}
We say that a complex linear connection $\nabla^{\mathbb{C}}$ on $M$ is \textit{adapted} to the decomposition $T^{\mathbb{C}}M=\mathcal{D}^{\mathbb{C}}\oplus \mathcal{D}^{\mathbb{C}'}$ if $Y\in \Gamma(\mathcal{D}^{\mathbb{C}})$ implies $\nabla^{\mathbb{C}}_XY\in \Gamma(\mathcal{D}^{\mathbb{C}})$, for any $X\in C^{\infty}(T^{\mathbb{C}}M)$ and $Y\in \Gamma(\mathcal{D}^{\mathbb{C}'})$ implies $\nabla_XY\in \Gamma(\mathcal{D}^{\mathbb{C}'})$, for any $X\in C^{\infty}(T^{\mathbb{C}}M)$.
\end{definition}

\begin{remark} \label{re}
If $(M,J,g)$ is a metallic Norden manifold such that $J^2=pJ+qI$ with $p^2+4q<0$, then a complex linear connection $\nabla^{\mathbb{C}}$ is adapted to $(\mathcal{D}^{\mathbb{C}},\mathcal{D}^{\mathbb{C}'})$ given by (\ref{dc}) if and only if $\nabla^{\mathbb{C}}$ is a $J^{\mathbb{C}}$-connection. Indeed, for $Y\in \Gamma(\mathcal{D}^{\mathbb{C}})$ we have $J^{\mathbb{C}}Y=\sigma_{-}^{\mathbb{C}}Y$ and $(\nabla^{\mathbb{C}}_XJ^{\mathbb{C}})Y=\sigma_{-}^{\mathbb{C}}\nabla^{\mathbb{C}}_XY-J^{\mathbb{C}}(\nabla^{\mathbb{C}}_XY)$, for any $X\in C^{\infty}(T^{\mathbb{C}}M)$, which implies that $\nabla^{\mathbb{C}}_XY\in \Gamma(\mathcal{D}^{\mathbb{C}})$ if and only if $\nabla^{\mathbb{C}} J^{\mathbb{C}}=0$. Similarly we deduce the second implication.
\end{remark}

\begin{proposition}
All adapted connections to $(\mathcal{D}^{\mathbb{C}}, \mathcal{D}^{\mathbb{C}'})$ are of the form:
\begin{equation}\label{ec}
(\nabla^{\mathbb{C}})^*_XY= \mathcal{P}^{\mathbb{C}}(\nabla^{\mathbb{C}}_X\mathcal{P}^{\mathbb{C}}Y)+\mathcal{P}^{\mathbb{C}'}(\nabla^{\mathbb{C}}_X\mathcal{P}^{\mathbb{C}'}Y)+
\mathcal{P}^{\mathbb{C}}(S(X,\mathcal{P}^{\mathbb{C}}Y))+
\mathcal{P}^{\mathbb{C}'}(S(X,\mathcal{P}^{\mathbb{C}'}Y)),
\end{equation}
for any $X$, $Y \in C^{\infty}(T^{\mathbb{C}}M)$, where $\nabla^{\mathbb{C}}$ is a complex linear connection and $S$ is a complex $(1,2)$-tensor field on $M$.
\end{proposition}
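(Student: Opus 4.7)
The plan is to adapt, line by line, the real-case Bejancu--Farran argument that produces formula (\ref{1000}) to the complexified setting. Since $\mathcal{P}^{\mathbb{C}}$ and $\mathcal{P}^{\mathbb{C}'}$ are $C^{\infty}(M,\mathbb{C})$-linear projectors satisfying $\mathcal{P}^{\mathbb{C}}+\mathcal{P}^{\mathbb{C}'}=I^{\mathbb{C}}$ and $\mathcal{P}^{\mathbb{C}}\circ\mathcal{P}^{\mathbb{C}'}=\mathcal{P}^{\mathbb{C}'}\circ\mathcal{P}^{\mathbb{C}}=0$, the two ingredients of the real proof, namely idempotence and orthogonality, are still in force; only $\mathbb{R}$ is replaced by $\mathbb{C}$.

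First I would verify the easy direction: the right-hand side of (\ref{ec}) defines a complex linear connection for any choice of $\nabla^{\mathbb{C}}$ and any complex $(1,2)$-tensor $S$. The $\mathbb{C}$-linearity in $X$ is immediate since each of $\nabla^{\mathbb{C}}_X$, $\mathcal{P}^{\mathbb{C}}$, $\mathcal{P}^{\mathbb{C}'}$, $S(X,\cdot)$ is $\mathbb{C}$-linear in $X$; the additivity in $Y$ is clear; the Leibniz rule in $Y$ follows because $S$ and the projectors are $C^{\infty}(M,\mathbb{C})$-linear in $Y$, while $\nabla^{\mathbb{C}}$ satisfies Leibniz. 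I would then check adaptedness: for $Y\in\Gamma(\mathcal{D}^{\mathbb{C}})$ one has $\mathcal{P}^{\mathbb{C}}Y=Y$ and $\mathcal{P}^{\mathbb{C}'}Y=0$, so (\ref{ec}) collapses to $\mathcal{P}^{\mathbb{C}}(\nabla^{\mathbb{C}}_XY+S(X,Y))\in\Gamma(\mathcal{D}^{\mathbb{C}})$; the symmetric computation for $Y\in\Gamma(\mathcal{D}^{\mathbb{C}'})$ lands in $\Gamma(\mathcal{D}^{\mathbb{C}'})$, as required by Remark \ref{re}.

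For the converse direction, I would exploit the fact that the difference of two complex linear connections is a complex $(1,2)$-tensor. Given any adapted $(\nabla^{\mathbb{C}})^*$, define $S(X,Y):=(\nabla^{\mathbb{C}})^*_XY-\nabla^{\mathbb{C}}_XY$. Using $Y=\mathcal{P}^{\mathbb{C}}Y+\mathcal{P}^{\mathbb{C}'}Y$ and adaptedness of $(\nabla^{\mathbb{C}})^*$, which gives $(\nabla^{\mathbb{C}})^*_X\mathcal{P}^{\mathbb{C}}Y\in\Gamma(\mathcal{D}^{\mathbb{C}})$ and $(\nabla^{\mathbb{C}})^*_X\mathcal{P}^{\mathbb{C}'}Y\in\Gamma(\mathcal{D}^{\mathbb{C}'})$, the projectors act as the identity on these terms. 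Substituting $\nabla^{\mathbb{C}}_X\mathcal{P}^{\mathbb{C}}Y=(\nabla^{\mathbb{C}})^*_X\mathcal{P}^{\mathbb{C}}Y-S(X,\mathcal{P}^{\mathbb{C}}Y)$ inside $\mathcal{P}^{\mathbb{C}}$, the first and third terms of (\ref{ec}) combine to $\mathcal{P}^{\mathbb{C}}((\nabla^{\mathbb{C}})^*_X\mathcal{P}^{\mathbb{C}}Y)=(\nabla^{\mathbb{C}})^*_X\mathcal{P}^{\mathbb{C}}Y$; the analogous manipulation produces $(\nabla^{\mathbb{C}})^*_X\mathcal{P}^{\mathbb{C}'}Y$ from the second and fourth terms. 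Their sum is $(\nabla^{\mathbb{C}})^*_XY$, which is the desired formula.

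I do not expect any genuine obstacle: the only thing requiring a moment of care is the bookkeeping that all operators ($\nabla^{\mathbb{C}}$, $\mathcal{P}^{\mathbb{C}}$, $\mathcal{P}^{\mathbb{C}'}$, $S$) really are $\mathbb{C}$-linear extensions of their real counterparts, so that $S=(\nabla^{\mathbb{C}})^*-\nabla^{\mathbb{C}}$ is indeed a \emph{complex} $(1,2)$-tensor; once this is checked, the projector identities reduce the entire argument to the real Bejancu--Farran proof.
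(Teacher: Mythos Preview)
Your proposal is correct and is exactly the approach the paper takes: its entire proof is the single sentence ``We follow the same steps like in the real case \cite{b:f}.'' You have simply written out in detail the Bejancu--Farran argument that the paper only cites, so there is nothing to compare.
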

\begin{proof}
We follow the same steps like in the real case \cite{b:f}.
\end{proof}

Consider the following adapted connection to $(\mathcal{D}^{\mathbb{C}}, \mathcal{D}^{\mathbb{C}'})$:

\medskip

1) \textit{The complex Schouten-van Kampen connection} $\tilde{\nabla}^{\mathbb{C}}$ of the complex linear connection $\nabla^{\mathbb{C}}$, obtained from (\ref{ec}) for $S:=0$:
$$\tilde{\nabla}^{\mathbb{C}}_XY:=\mathcal{P}^{\mathbb{C}}(\nabla^{\mathbb{C}}_X\mathcal{P}^{\mathbb{C}}Y)+
\mathcal{P}^{\mathbb{C}'}(\nabla^{\mathbb{C}}_X\mathcal{P}^{\mathbb{C}'}Y).$$

If $(M,J,g)$ is a metallic Norden manifold such that $J^2=pJ+qI$ with $p^2+4q<0$ and $\nabla^{\mathbb{C}}$ is torsion-free, then $\tilde{\nabla}^{\mathbb{C}}$ is explicitly given by:

\begin{equation}\label{g}
\tilde{\nabla}^{\mathbb{C}}_XY=\frac{1}{p^2+4q}[(2J^{\mathbb{C}}-pI^{\mathbb{C}})(\nabla^{\mathbb{C}}_XJY)-
(pJ^{\mathbb{C}}-(p^2+2q)I^{\mathbb{C}})(\nabla^{\mathbb{C}}_XY)]=
\end{equation}
$$={\nabla}^{\mathbb{C}}_XY+\frac{1}{p^2+4q}[2J^{\mathbb{C}}({\nabla}^{\mathbb{C}}_XJ^{\mathbb{C}})-p({\nabla}^{\mathbb{C}}_XJ^{\mathbb{C}})]Y,$$
for any $X$, $Y\in C^{\infty}(T^{\mathbb{C}}M)$.

Remark that if $\nabla^{\mathbb{C}}$ is the Levi-Civita connection associated to $g^{\mathbb{C}}$, then $\tilde{\nabla}^{\mathbb{C}}$ is a metric $J^{\mathbb{C}}$-connection, i.e. $\tilde{\nabla}^{\mathbb{C}}g^{\mathbb{C}}=\tilde{\nabla}^{\mathbb{C}}J^{\mathbb{C}}=0$, whose torsion is given by:
$$T^{{\tilde{\nabla}}^{\mathbb{C}}}(X,Y)=\frac{1}{p^2+4q}[(2J^{\mathbb{C}}-pI^{\mathbb{C}})({\nabla}^{\mathbb{C}}_X JY-{\nabla}^{\mathbb{C}}_Y J^{\mathbb{C}}X)-(pJ^{\mathbb{C}}+2qI^{\mathbb{C}})(\nabla^{\mathbb{C}}_XY-\nabla^{\mathbb{C}}_YX)],$$
for any $X$, $Y \in C^{\infty}(T^{\mathbb{C}}M)$.

\medskip

2) \textit{The complex Vr\u anceanu connection} $\bar{\nabla}^{\mathbb{C}}$ of the complex linear connection $\nabla^{\mathbb{C}}$, obtained from (\ref{ec}) for
$$S(X,Y):=-\mathcal{P}^{\mathbb{C}}(\nabla^{\mathbb{C}}_{\mathcal{P}^{\mathbb{C}'}X}\mathcal{P}^{\mathbb{C}}Y)-
\mathcal{P}^{\mathbb{C}'}(\nabla^{\mathbb{C}}_{\mathcal{P}^{\mathbb{C}'}X}\mathcal{P}^{\mathbb{C}'}Y)
+\mathcal{P}^{\mathbb{C}}([\mathcal{P}^{\mathbb{C}'}X,\mathcal{P}^{\mathbb{C}}Y]^{\mathbb{C}})+
\mathcal{P}^{\mathbb{C}'}([\mathcal{P}^{\mathbb{C}}X,\mathcal{P}^{\mathbb{C}'}Y]^{\mathbb{C}}).$$

If $(M,J,g)$ is a metallic Norden manifold such that $J^2=pJ+qI$ with $p^2+4q<0$, then $\bar{\nabla}^{\mathbb{C}}$ is explicitly given by:
\begin{equation}\label{...}
\bar{\nabla}^{\mathbb{C}}_XY=\tilde{\nabla}^{\mathbb{C}}_{\mathcal{P}^{\mathbb{C}}X}Y+
\mathcal{P}^{\mathbb{C}}([\mathcal{P}^{\mathbb{C}'}X,\mathcal{P}^{\mathbb{C}}Y]^{\mathbb{C}})+
\mathcal{P}^{\mathbb{C}'}([\mathcal{P}^{\mathbb{C}}X,\mathcal{P}^{\mathbb{C}'}Y]^{\mathbb{C}}),
\end{equation}
for any $X,Y\in C^{\infty}(T^{\mathbb{C}}M)$.

Moreover, $\bar{\nabla}^{\mathbb{C}}$ is a $J^{\mathbb{C}}$-connection, i.e. $\bar{\nabla}^{\mathbb{C}}J^{\mathbb{C}}=0$, whose torsion is given by:
$$
T^{\bar{\nabla}^{\mathbb{C}}}(X,Y)=\frac{1}{p^2+4q}N_{J^{\mathbb{C}}}(X,Y)+\mathcal{P^{\mathbb{C}}}'(T^{\nabla^{\mathbb{C}}}(\mathcal{P^{\mathbb{C}}}'X,\mathcal{P^{\mathbb{C}}}'Y))-\mathcal{P^{\mathbb{C}}}(T^{\nabla^{\mathbb{C}}}(\mathcal{P^{\mathbb{C}}}X,\mathcal{P^{\mathbb{C}}}Y)),
$$
for any $X$, $Y\in C^{\infty}(T^{\mathbb{C}}M)$.

\medskip

3) \textit{The complex Vidal connection} $\tilde{\tilde{\nabla}}^{\mathbb{C}}$ associated to the metallic Norden structure $(J,g)$, obtained from (\ref{ec}) for
$$S(X,Y):=-{\mathcal{P}}^{\mathbb{C}}(\nabla_{\mathcal{P}^{\mathbb{C}}Y}{{\mathcal{P}}^{\mathbb{C}}}')X-{{\mathcal{P}}^{\mathbb{C}}}'(\nabla_{{{\mathcal{P}}^{\mathbb{C}}}'Y}\mathcal{P}^{\mathbb{C}})X,$$
therefore:
\begin{equation}\label{cv}
\tilde{\tilde{\nabla}}^{\mathbb{C}}_XY=\tilde{\nabla}^{\mathbb{C}}_XY
-{\mathcal{P}}^{\mathbb{C}}(\nabla_{\mathcal{P}^{\mathbb{C}}Y}{{\mathcal{P}}^{\mathbb{C}}}')X-{{\mathcal{P}}^{\mathbb{C}}}'(\nabla_{{{\mathcal{P}}^{\mathbb{C}}}'Y}\mathcal{P}^{\mathbb{C}})X=\end{equation}
$$={\tilde{\nabla}}^{\mathbb{C}}_XY+\frac{1}{p^2+4q}[(\nabla_{J^{\mathbb{C}}Y}J^{\mathbb{C}})X+J^{\mathbb{C}}((\nabla_YJ^{\mathbb{C}})X)-p(\nabla_YJ^{\mathbb{C}})X],$$
for any $X,Y\in C^{\infty}(T^{\mathbb{C}}M)$, where $\nabla^{\mathbb{C}}$ is the Levi-Civita connection of $g^{\mathbb{C}}$.

Moreover, $\tilde{\tilde{\nabla}}^{\mathbb{C}}$ is a $J^{\mathbb{C}}$-connection, i.e. $\tilde{\tilde{\nabla}}^{\mathbb{C}}J^{\mathbb{C}}=0$, whose torsion is given by:
$$
T^{\tilde{\tilde{\nabla}}^{\mathbb{C}}}(X,Y)=\frac{1}{p^2+4q}N_{J^{\mathbb{C}}}(X,Y),
$$
for any $X,Y\in C^{\infty}(T^{\mathbb{C}}M)$.

Moreover, we get:
$$
({\tilde{\tilde{\nabla}}}^{\mathbb{C}}_Xg^{\mathbb{C}})(Y,Z)=-\frac{1}{p^2+4q}[g^{\mathbb{C}}((\nabla^{\mathbb{C}}_{J^{\mathbb{C}}Y}J^{\mathbb{C}})X-(\nabla^{\mathbb{C}}_YJ^{\mathbb{C}})J^{\mathbb{C}}X,Z)+$$$$+g^{\mathbb{C}}((\nabla_{J^{\mathbb{C}}Z}J^{\mathbb{C}})X-(\nabla^{\mathbb{C}}_ZJ^{\mathbb{C}})J^{\mathbb{C}}X,Y)]
=$$
$$=\frac{1}{p^2+4q}[g^{\mathbb{C}}(M_{J^{\mathbb{C}}}(Y,X),Z)+g^{\mathbb{C}}(M_{J^{\mathbb{C}}}(Z,X),Y)+$$$$+g^{\mathbb{C}}((\nabla^{\mathbb{C}}_{J^{\mathbb{C}}X}J^{\mathbb{C}})Y+(\nabla^{\mathbb{C}}_{Y}J^{\mathbb{C}})J^{\mathbb{C}}X,Z)+g^{\mathbb{C}}((\nabla^{\mathbb{C}}_{J^{\mathbb{C}}X}J^{\mathbb{C}})Z+(\nabla^{\mathbb{C}}_{Z}J)J^{\mathbb{C}}X,Y)],$$
for any $X$, $Y$, $Z\in C^{\infty}(T^{\mathbb{C}}M)$.

\bigskip

Since $\tilde{\nabla}^{\mathbb{C}}J^{\mathbb{C}}=\bar{\nabla}^{\mathbb{C}}J^{\mathbb{C}}=\tilde{\tilde{\nabla}}^{\mathbb{C}}J^{\mathbb{C}}=0$, from Remark \ref{r4} we deduce:
\begin{proposition}
The distributions $\mathcal{D}^{\mathbb{C}}$ and $\mathcal{D}^{\mathbb{C}'}$ are $\tilde{\nabla}^{\mathbb{C}}$-geodesically invariant, $\bar{\nabla}^{\mathbb{C}}$-geodesically invariant and $\tilde{\tilde{\nabla}}^{\mathbb{C}}$-geodesically invariant.
\end{proposition}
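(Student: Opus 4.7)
The plan is to invoke Remark \ref{r4} three times, once for each of the adapted connections $\tilde{\nabla}^{\mathbb{C}}$, $\bar{\nabla}^{\mathbb{C}}$, $\tilde{\tilde{\nabla}}^{\mathbb{C}}$. Remark \ref{r4} states that any $J^{\mathbb{C}}$-connection $\nabla^{\mathbb{C}}$ (i.e., one with $\nabla^{\mathbb{C}}J^{\mathbb{C}}=0$) renders both complex metallic distributions geodesically invariant, because on sections of $\mathcal{D}^{\mathbb{C}}$ (resp. $\mathcal{D}^{\mathbb{C}'}$) one has $J^{\mathbb{C}}X=\sigma_{-}^{\mathbb{C}}X$ (resp. $\sigma_{+}^{\mathbb{C}}X$), and vanishing of $\nabla^{\mathbb{C}}J^{\mathbb{C}}$ forces $(\nabla^{\mathbb{C}}_XJ^{\mathbb{C}})Y+(\nabla^{\mathbb{C}}_YJ^{\mathbb{C}})X=0$ trivially.

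The key ingredient, already recorded in the lines immediately preceding the statement, is the string of identities
\[
\tilde{\nabla}^{\mathbb{C}}J^{\mathbb{C}}=0,\qquad \bar{\nabla}^{\mathbb{C}}J^{\mathbb{C}}=0,\qquad \tilde{\tilde{\nabla}}^{\mathbb{C}}J^{\mathbb{C}}=0,
\]
which were established when each of the three connections was introduced (respectively via formula~(\ref{g}), formula~(\ref{...}), and formula~(\ref{cv})). So the proof writes itself: cite these three identities, cite Remark \ref{r4}, and conclude.

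Since this is a direct corollary with no hidden content, I would not expect any real obstacle. The only mild subtlety worth flagging is that Remark \ref{r4} is stated for sections of $\mathcal{D}^{\mathbb{C}}$ or $\mathcal{D}^{\mathbb{C}'}$ separately, so one should note that the argument is applied separately on each eigen-distribution of $J^{\mathbb{C}}$ (using $\sigma_{-}^{\mathbb{C}}$ for $\mathcal{D}^{\mathbb{C}}$ and $\sigma_{+}^{\mathbb{C}}$ for $\mathcal{D}^{\mathbb{C}'}$), but the computation is identical in both cases. Thus the entire proof reduces to one sentence invoking Remark \ref{r4} together with the three already-proved facts $\tilde{\nabla}^{\mathbb{C}}J^{\mathbb{C}}=\bar{\nabla}^{\mathbb{C}}J^{\mathbb{C}}=\tilde{\tilde{\nabla}}^{\mathbb{C}}J^{\mathbb{C}}=0$.
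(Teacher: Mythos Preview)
Your proposal is correct and matches the paper's own argument exactly: the paper simply notes that $\tilde{\nabla}^{\mathbb{C}}J^{\mathbb{C}}=\bar{\nabla}^{\mathbb{C}}J^{\mathbb{C}}=\tilde{\tilde{\nabla}}^{\mathbb{C}}J^{\mathbb{C}}=0$ and invokes Remark~\ref{r4}. There is nothing to add.
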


From all the above considerations, we can state:
\begin{theorem}
If $(M,J,g)$ is a metallic Norden manifold such that $J^2=pJ+qI$ with $p^2+4q<0$, then the following assertions are equivalent:

(i) the distributions $\mathcal{D}^{\mathbb{C}}$ and $\mathcal{D}^{\mathbb{C}'}$ are integrable;

(ii) $(M,J_c)$ is a complex manifold;



(iii) the complex Vidal connection given by (\ref{cv}) is torsion-free.

\end{theorem}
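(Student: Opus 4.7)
The plan is to show that each of (i), (ii), (iii) is equivalent to the single condition $N_J=0$, and then chain these equivalences together. All three reductions rely on results already established in the preceding subsection, so the argument should be short.

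First I would handle (i) $\Leftrightarrow$ $N_J=0$. This is immediate from the Proposition stated just after the definition of the complex metallic distributions: $\mathcal{D}^{\mathbb{C}}$ (equivalently $\mathcal{D}^{\mathbb{C}'} = \overline{\mathcal{D}^{\mathbb{C}}}$) is integrable if and only if $N_J = 0$. Nothing new needs to be proved here; I would just cite that Proposition.

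Next I would prove (iii) $\Leftrightarrow$ $N_J = 0$. The torsion of the complex Vidal connection was computed explicitly as
\[
T^{\tilde{\tilde{\nabla}}^{\mathbb{C}}}(X,Y) = \frac{1}{p^2+4q} N_{J^{\mathbb{C}}}(X,Y),
\]
for all $X,Y \in C^{\infty}(T^{\mathbb{C}}M)$. Since $N_{J^{\mathbb{C}}}$ is simply the $\mathbb{C}$-linear extension of $N_J$ to $T^{\mathbb{C}}M$, it vanishes identically if and only if $N_J = 0$. Because $p^2+4q\neq 0$, the prefactor is nonzero, so $\tilde{\tilde{\nabla}}^{\mathbb{C}}$ is torsion-free precisely when $N_J = 0$.

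Finally I would handle (ii) $\Leftrightarrow$ $N_J = 0$. By the Newlander--Nirenberg theorem, the almost complex structure $J_c$ defines a complex structure on $M$ if and only if its Nijenhuis tensor vanishes, $N_{J_c} = 0$. The Proposition proved by direct computation earlier gives
\[
N_{J_c}(X,Y) = \frac{4}{-p^2-4q} N_J(X,Y),
\]
and since $-p^2-4q > 0$ the scalar factor is nonzero, so $N_{J_c} = 0$ if and only if $N_J = 0$.

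Combining the three equivalences closes the cycle (i) $\Leftrightarrow$ (ii) $\Leftrightarrow$ (iii). I do not foresee any real obstacle, since every ingredient has been prepared in the previous pages; the only mild subtlety is the appeal to Newlander--Nirenberg for the passage to (ii), which is standard and could even be replaced by a direct assertion of integrability of $J_c$ in the sense that the $(0,1)$-distribution $\mathcal{D}^{\mathbb{C}}$ is involutive, if one wishes to keep the argument entirely internal to the framework developed here.
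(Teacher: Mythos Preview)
Your proposal is correct and follows exactly the same approach as the paper, which presents the theorem without a separate proof and simply says ``From all the above considerations, we can state:'' --- the considerations being precisely the three ingredients you identify (the Proposition that $\mathcal{D}^{\mathbb{C}}$ is integrable iff $N_J=0$, the torsion formula $T^{\tilde{\tilde{\nabla}}^{\mathbb{C}}}=\frac{1}{p^2+4q}N_{J^{\mathbb{C}}}$, and the relation $N_{J_c}=\frac{4}{-p^2-4q}N_J$). Your explicit invocation of Newlander--Nirenberg for the equivalence with (ii) is the standard justification the paper leaves implicit.
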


\begin{theorem}
If $(M,J,g)$ is a metallic Norden manifold such that $J^2=pJ+qI$ with $p^2+4q<0$, then the following assertions are equivalent:

(i) the distributions $\mathcal{D}^{\mathbb{C}}$ and $\mathcal{D}^{\mathbb{C}'}$ are geodesically invariant;



(ii) the complex Vidal connection given by (\ref{cv}) is metric with respect to $g^{\mathbb{C}}$.
\end{theorem}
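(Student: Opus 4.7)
The strategy is to funnel both (i) and (ii) through the single tensorial condition $M_{J}=0$, so that the equivalence becomes (i) $\Longleftrightarrow M_{J}=0 \Longleftrightarrow$ (ii). The first half, (i) $\Longleftrightarrow M_{J}=0$, is immediate from the proposition on complex geodesic invariance proved just above. What remains is the equivalence $M_{J}=0 \Longleftrightarrow \tilde{\tilde{\nabla}}^{\mathbb{C}}g^{\mathbb{C}}=0$, for which I would use the explicit formula for $(\tilde{\tilde{\nabla}}^{\mathbb{C}}_{X}g^{\mathbb{C}})(Y,Z)$ displayed immediately before the theorem.

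For the forward direction $M_{J}=0 \Longrightarrow \tilde{\tilde{\nabla}}^{\mathbb{C}}g^{\mathbb{C}}=0$, I would substitute $M_{J^{\mathbb{C}}}=0$ into the second form of the displayed formula, which kills the first two terms, and then verify that the remaining ``correction'' terms
$g^{\mathbb{C}}((\nabla^{\mathbb{C}}_{J^{\mathbb{C}}X}J^{\mathbb{C}})Y+(\nabla^{\mathbb{C}}_{Y}J^{\mathbb{C}})J^{\mathbb{C}}X,Z)+g^{\mathbb{C}}((\nabla^{\mathbb{C}}_{J^{\mathbb{C}}X}J^{\mathbb{C}})Z+(\nabla^{\mathbb{C}}_{Z}J^{\mathbb{C}})J^{\mathbb{C}}X,Y)$
also vanish. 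This rests on three algebraic ingredients: differentiating $J^{2}=pJ+qI$ yields $J\circ(\nabla_{W}J)+(\nabla_{W}J)\circ J=p(\nabla_{W}J)$; since $\nabla^{\mathbb{C}}$ is Levi-Civita for $g^{\mathbb{C}}$ and $J^{\mathbb{C}}$ is $g^{\mathbb{C}}$-symmetric, each endomorphism $\nabla_{W}^{\mathbb{C}}J^{\mathbb{C}}$ is itself $g^{\mathbb{C}}$-symmetric; and from $2H_{J}=N_{J}+M_{J}$ together with the antisymmetry of $N_{J}$ one obtains the identity $M_{J}(X,Y)=H_{J}(X,Y)+H_{J}(Y,X)$, so $M_{J}=0$ forces antisymmetry of $H_{J}$. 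Combining these, the $(Y\leftrightarrow Z)$-symmetrised correction collapses to a combination built from $M_{J}$ alone, and hence vanishes.

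For the converse $\tilde{\tilde{\nabla}}^{\mathbb{C}}g^{\mathbb{C}}=0 \Longrightarrow M_{J}=0$, I would polarize the same formula. Specialising $Y,Z\in\Gamma(\mathcal{D}^{\mathbb{C}})$ so that $J^{\mathbb{C}}Y=\sigma^{\mathbb{C}}_{-}Y$ and $J^{\mathbb{C}}Z=\sigma^{\mathbb{C}}_{-}Z$, the correction terms reduce to explicit scalar multiples of $g^{\mathbb{C}}((\nabla^{\mathbb{C}}_{Y}J^{\mathbb{C}})X,Z)$ and its analogues, which via the $g^{\mathbb{C}}$-symmetry of $\nabla^{\mathbb{C}}J^{\mathbb{C}}$ can be absorbed into or paired against the $M_{J^{\mathbb{C}}}$-piece. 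The vanishing assumption then forces $g^{\mathbb{C}}(M_{J^{\mathbb{C}}}(Y,X),Z)=0$. Using that $\mathcal{D}^{\mathbb{C}}$ and $\mathcal{D}^{\mathbb{C}'}$ are $g^{\mathbb{C}}$-orthogonal (which holds since $q\ne 0$, automatic from $p^{2}+4q<0$) and that $g^{\mathbb{C}}$ is non-degenerate on each eigenspace, letting $X,Y,Z$ range over $T^{\mathbb{C}}M=\mathcal{D}^{\mathbb{C}}\oplus\mathcal{D}^{\mathbb{C}'}$ yields $M_{J^{\mathbb{C}}}=0$ and hence $M_{J}=0$.

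The main obstacle is the algebraic bookkeeping in the forward direction: carefully tracking the interplay between $J^{\mathbb{C}}$-symmetry, $g^{\mathbb{C}}$-symmetry of $\nabla^{\mathbb{C}}J^{\mathbb{C}}$, and the quadratic relation $J^{2}=pJ+qI$ is where sign slips would be most likely. Everything else is a direct parallel with the real-case theorem that immediately precedes this section, and appeals only to formulas already proved in the excerpt.
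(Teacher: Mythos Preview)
Your strategy—pivoting both (i) and (ii) through the condition $M_{J}=0$—is exactly the paper's approach; the paper records no proof beyond ``from all the above considerations'', relying on the preceding proposition and the displayed formula for $(\tilde{\tilde{\nabla}}^{\mathbb{C}}_{X}g^{\mathbb{C}})(Y,Z)$.

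You are, however, working harder than necessary by attacking the second form of that formula. The \emph{first} form simplifies directly: using the $g^{\mathbb{C}}$-symmetry of both $J^{\mathbb{C}}$ and each $\nabla^{\mathbb{C}}_{W}J^{\mathbb{C}}$ (ingredients you already list), one has
\[
g^{\mathbb{C}}\bigl((\nabla^{\mathbb{C}}_{J^{\mathbb{C}}Y}J^{\mathbb{C}})X-(\nabla^{\mathbb{C}}_{Y}J^{\mathbb{C}})J^{\mathbb{C}}X,\;Z\bigr)=-\,g^{\mathbb{C}}\bigl(X,\,H_{J^{\mathbb{C}}}(Y,Z)\bigr),
\]
and hence
\[
(\tilde{\tilde{\nabla}}^{\mathbb{C}}_{X}g^{\mathbb{C}})(Y,Z)=\frac{1}{p^{2}+4q}\,g^{\mathbb{C}}\bigl(X,\,H_{J^{\mathbb{C}}}(Y,Z)+H_{J^{\mathbb{C}}}(Z,Y)\bigr)=\frac{1}{p^{2}+4q}\,g^{\mathbb{C}}\bigl(X,\,M_{J^{\mathbb{C}}}(Y,Z)\bigr).
\]
Both implications are then immediate from the non-degeneracy of $g^{\mathbb{C}}$, with no need for the correction-term analysis or the eigenspace polarisation you outline. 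Your claim that the correction terms collapse to an $M_{J}$-combination is in fact true—they equal $g^{\mathbb{C}}(X,M_{J^{\mathbb{C}}}(Y,Z))-g^{\mathbb{C}}(M_{J^{\mathbb{C}}}(Y,X),Z)-g^{\mathbb{C}}(M_{J^{\mathbb{C}}}(Z,X),Y)$—but verifying this is tantamount to redoing the one-line simplification above.
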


\subsection{The $\bar\partial$-operator of a metallic complex structure}

\begin{definition} A metallic manifold $(M,J)$ such that $J^2=pJ+qI$ with $p^2+4q<0$ and $J$ integrable is called \emph{metallic complex manifold}.
\end{definition}
Let $(M,J)$ be a metallic complex manifold and let $J_c=-\frac{1}{\sqrt{-p^2-4q}}(2J-pI)$ be the associated complex structure. Consider its dual map $J^*_c: T^*M \rightarrow T^*M$, defined by $(J^*_c\alpha)(X):=\alpha(J_cX)$, for any $\alpha \in C^{\infty}(T^*M)$ and for any $X \in C^{\infty}(TM)$.

We shall define the real differential operator $d^c$ acting on forms:
$$d^c:=J^*_c\circ d\circ J^*_c,$$
where $d$ is the real differential operator.

If  $(M,J,g)$ is an integrable metallic Norden manifold, we can consider the real codifferential operator $\delta^c$ acting on forms:
$$\delta^c:=\star \circ d^c\circ \star,$$
where $\star$ is the Hodge-star operator with respect to the metric $g$.

We obtain
$$d^c \circ d^c=0, \ \ d\circ d^c+d^c\circ d=0,$$
$$\delta^c \circ \delta^c=0, \ \ \delta\circ \delta^c+\delta^c\circ \delta=0,$$
where $\delta$ is the codifferential operator, and with respect to the scalar product $\langle \cdot,\cdot\rangle$ induced by $g$, the operators $d^c$ and $\delta^c$ are adjoint, i.e.
$$\langle d^c \alpha,\beta\rangle=\langle \alpha,\delta^c\beta\rangle,$$
for any $\alpha$, $\beta\in C^{\infty}(T^*M)$.

Remark that $J^*\circ \star=\star \circ J^*$ (and $J^*_c\circ \star=\star \circ J^*_c$) implies $\delta^c=J^*_c\circ \delta\circ J^*_c$ and
$$d^c \circ J^*_c=-J^*_c\circ d, \ \ J^*_c\circ d^c=-d \circ J^*_c,$$
$$\delta^c \circ J^*_c=-J^*_c\circ \delta, \ \ J^*_c\circ \delta^c=-\delta \circ J^*_c.$$

From the above relations, we can state:
\begin{proposition}
Let $\alpha$ be a real form on $M$.

i) If $\alpha$ is $d^c$-closed (resp. $\delta^c$-coclosed), then $J^*_c\alpha$ is closed (resp. coclosed).

ii) If $\alpha$ is closed (resp. coclosed), then $J^*_c\alpha$ is $d^c$-closed (resp. $\delta^c$-coclosed).

iii) If $\alpha$ is $J^*_c$-invariant, i.e. $J^*_c\alpha=\alpha$, then $\alpha$ is $d^c$-closed (resp. $\delta^c$-coclosed)
if and only if it is closed (resp. coclosed).
\end{proposition}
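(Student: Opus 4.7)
The plan is to derive all three statements directly from the four commutation identities displayed just above the proposition, namely
\[
d^{c}\circ J^{*}_{c}=-J^{*}_{c}\circ d,\qquad J^{*}_{c}\circ d^{c}=-d\circ J^{*}_{c},
\]
and the analogous pair with $\delta$ in place of $d$. No calculation in local coordinates is needed, and $J_c$ being an (integrable) almost complex structure is used only through these commutation formulas.

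First I would handle (i). Suppose $\alpha$ is $d^{c}$-closed, so $d^{c}\alpha=0$. Applying $J^{*}_{c}$ and using $J^{*}_{c}\circ d^{c}=-d\circ J^{*}_{c}$ yields
\[
0=J^{*}_{c}(d^{c}\alpha)=-d(J^{*}_{c}\alpha),
\]
so $J^{*}_{c}\alpha$ is closed. The $\delta^{c}$-coclosed statement is obtained in exactly the same way from $J^{*}_{c}\circ\delta^{c}=-\delta\circ J^{*}_{c}$.

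For (ii) the argument is dual. Suppose $d\alpha=0$. Then, using $d^{c}\circ J^{*}_{c}=-J^{*}_{c}\circ d$,
\[
d^{c}(J^{*}_{c}\alpha)=-J^{*}_{c}(d\alpha)=0,
\]
so $J^{*}_{c}\alpha$ is $d^{c}$-closed. The coclosed version follows identically from $\delta^{c}\circ J^{*}_{c}=-J^{*}_{c}\circ\delta$.

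Finally, for (iii) I would simply specialize (i) and (ii) to a $J^{*}_{c}$-invariant form. If $J^{*}_{c}\alpha=\alpha$, then (i) forces $d^{c}\alpha=0\Longrightarrow d(J^{*}_{c}\alpha)=d\alpha=0$, and (ii) forces $d\alpha=0\Longrightarrow d^{c}(J^{*}_{c}\alpha)=d^{c}\alpha=0$; and symmetrically for $\delta$ and $\delta^{c}$. The only point requiring any thought is making sure the four commutation identities have indeed been established on forms of every degree (rather than only on $1$-forms), but this is already taken for granted in the paragraph preceding the proposition, so there is no real obstacle: the whole proof is a one-line manipulation applied in each of the four cases.
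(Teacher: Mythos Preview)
Your proposal is correct and matches the paper's approach exactly: the paper does not give a written proof for this proposition but introduces it with ``From the above relations, we can state,'' referring precisely to the four commutation identities you use. Your line-by-line derivation of (i), (ii), and (iii) from those identities is the intended argument.
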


Therefore, the $d^c$-closed (resp. $\delta^c$-coclosed) forms are the $J^*_c$-invariant closed (resp. coclosed) forms. Then
$$\ker (d^c)=\ker (d) \cap \{J^*_c-\textit{invariant forms}\}, \ \ Im (d^c)=J^*_c(Im (d)),$$
$$\ker (\delta^c)=\ker (\delta) \cap \{J^*_c-\textit{invariant forms}\}, \ \ Im (\delta^c)=J^*_c(Im (\delta)).$$

Then we can consider \textit{the metallic cohomology groups}
$$H^r(M):=\ker (d^c_r)/ Im (d^c_{r-1}),$$
where $$d^c_r:C^{\infty}({\Lambda}^{r}(M)) \rightarrow C^{\infty}({\Lambda}^{r+1}(M))$$
and \textit{the metallic homology groups}
$$H_r(M):=\ker (\delta^c_r)/Im (\delta^c_{r+1}),$$
where $$\delta^c_r:C^{\infty}({\Lambda}^{r}(M)) \rightarrow C^{\infty}({\Lambda}^{r-1}(M)).$$

Now we can introduce \textit{the metallic Hodge-Laplace operator}
$$\Delta^c: C^{\infty}({\Lambda}^r(M)) \rightarrow C^{\infty}({\Lambda}^r(M)), \ \ \Delta^c:=d^c\circ \delta^c+\delta^c\circ d^c,$$
which is symmetric and self-adjoint w.r.t. $\langle \cdot,\cdot\rangle$. Remark that
$$\Delta^c =-J^*_c\circ \Delta \circ J^*_c,$$
where $\Delta=d\circ \delta+\delta\circ d$ is the Hodge-Laplace operator,
and $\Delta^c$ satisfies
$$\Delta^c \circ J^*_c=J^*_c\circ \Delta, \ \ J^*_c\circ \Delta^c=\Delta \circ J^*_c.$$

\begin{definition}
A real form $\alpha$ is called \textit{$J$-harmonic} if it belongs to the kernel of the metallic Hodge-Laplace operator, i.e. $\Delta ^c\alpha=0$.
\end{definition}

From the above relations, we get:

\begin{proposition}
Let $\alpha$ be a real form on $M$.

i) If $\alpha$ is $J$-harmonic, then $J^*_c\alpha$ is harmonic.

ii) If $\alpha$ is harmonic, then $J^*_c\alpha$ is $J$-harmonic.

iii) If $\alpha$ is $J^*_c$-invariant, i.e. $J^*_c\alpha=\alpha$, then $\alpha$ is $J$-harmonic if and only if it is harmonic.

iv) $\alpha$ is $J$-harmonic if and only if it is $d^c$-closed and $\delta^c$-coclosed.
\end{proposition}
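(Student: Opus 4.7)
The plan is to obtain items (i)--(iii) as essentially formal consequences of the three identities $\Delta^c = -J^*_c \circ \Delta \circ J^*_c$, $\Delta^c \circ J^*_c = J^*_c \circ \Delta$ and $J^*_c \circ \Delta^c = \Delta \circ J^*_c$ already displayed just above the statement, using only the fact that $J^*_c$ is invertible on each $\Lambda^r(M)$, which follows from $J_c^2 = -I$. Item (iv) will require a separate argument based on the standard adjointness formula, and it is here that I expect the one genuine subtlety to appear.

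For (i), I would apply $J^*_c$ on the left of $\Delta^c \alpha = 0$ and invoke $J^*_c \circ \Delta^c = \Delta \circ J^*_c$ to conclude $\Delta(J^*_c \alpha) = 0$. Item (ii) is the mirror image: applying $J^*_c$ to $\Delta \alpha = 0$ and using $\Delta^c \circ J^*_c = J^*_c \circ \Delta$ yields $\Delta^c(J^*_c \alpha) = 0$. For (iii), substituting the hypothesis $J^*_c \alpha = \alpha$ into $\Delta^c = -J^*_c \circ \Delta \circ J^*_c$ gives $\Delta^c \alpha = -J^*_c(\Delta \alpha)$, so $\Delta^c \alpha$ and $\Delta \alpha$ are related through the invertible operator $-J^*_c$ and the desired equivalence follows at once.

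For (iv), the direction ``$d^c \alpha = 0$ and $\delta^c \alpha = 0$ imply $\Delta^c \alpha = 0$'' is immediate from the definition $\Delta^c = d^c \circ \delta^c + \delta^c \circ d^c$. For the converse, I would pair $\Delta^c \alpha$ with $\alpha$ using two applications of the adjointness $\langle d^c \beta, \gamma \rangle = \langle \beta, \delta^c \gamma \rangle$ already recorded, obtaining
\[
\langle \Delta^c \alpha, \alpha \rangle = \langle d^c \alpha, d^c \alpha \rangle + \langle \delta^c \alpha, \delta^c \alpha \rangle.
\]
The main obstacle is that in the metallic Norden setting the underlying metric $g$ is indefinite, so the induced pairing on forms is only pseudo-Riemannian, and a sum of two ``squares'' does not automatically vanish term by term. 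I would circumvent this either by replacing $\alpha$ on the right with an arbitrary test form and using non-degeneracy of $\langle \cdot, \cdot \rangle$ to conclude $d^c \alpha = 0$ and $\delta^c \alpha = 0$ independently, or by passing to a positive definite companion inner product built from $J_c$ and then transferring the vanishing back to $\langle \cdot, \cdot \rangle$ via the commutation relations of $J^*_c$ with $\star$, $d$ and $\delta$.
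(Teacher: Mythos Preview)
Your treatment of (i)--(iii) is exactly what the paper has in mind: the proposition is stated there as an immediate consequence of the displayed identities $\Delta^c=-J^*_c\circ\Delta\circ J^*_c$, $\Delta^c\circ J^*_c=J^*_c\circ\Delta$, $J^*_c\circ\Delta^c=\Delta\circ J^*_c$, and your unpacking of that is the natural one.

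For (iv) you have put your finger on a genuine difficulty that the paper itself does not address (it offers nothing beyond ``from the above relations''). However, neither of your proposed repairs actually works. Pairing $\Delta^c\alpha$ with an arbitrary test form $\beta$ gives only
\[
\langle d^c\alpha,\,d^c\beta\rangle+\langle\delta^c\alpha,\,\delta^c\beta\rangle=0\qquad\text{for all }\beta,
\]
and non-degeneracy of $\langle\cdot,\cdot\rangle$ does \emph{not} let you split this into $d^c\alpha=0$ and $\delta^c\alpha=0$ separately: the two summands are coupled through $\beta$, and varying $\beta$ does not decouple them in an indefinite inner product. Your second idea, manufacturing a positive definite companion pairing from $J_c$, is left too vague to carry weight; in the Norden setting there is no canonical Riemannian metric for which $d^c$ and $\delta^c$ remain formal adjoints, so you would have to produce one explicitly and verify the adjointness, which is a nontrivial task you have not attempted.

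In short: your instinct that (iv) hides a real gap in the pseudo-Riemannian context is correct, and the paper does not fill it either; but the fixes you sketch do not close it.
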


Therefore, the $J$-harmonic forms are the $J^*_c$-invariant harmonic forms. Then
$$\ker (\Delta^c)=\ker (\Delta) \cap \{J^*_c-\textit{invariant forms}\}, \ \ Im (\Delta^c)=J^*_c(Im (\Delta)).$$

\bigskip

Let $$T^{\mathbb{C}}M=T^{(1,0)}M \oplus T^{(0,1)}M={\mathcal{D}^{\mathbb{C}'}}\oplus{\mathcal{D}^{\mathbb{C}}}$$
be the decomposition of the complexified tangent bundle into $(1,0)$ and $(0,1)$ parts, with respect to the complex structure $J_c$ or, equivalently, with respect to the distributions defined by $J$.

The $\bar\partial$-operator and $\bar{\bar{\partial}}$-operator acting on $(r,s)$-forms on $M$ are defined as follows:
$$ \bar\partial : C^{\infty}({\Lambda}^{(r,s)}(M)) \rightarrow C^{\infty}({\Lambda}^{(r,s+1)}(M)), \ \ \bar\partial :={1\over 2}(d-id^c),$$
$$ \bar{\bar{\partial}} : C^{\infty}({\Lambda}^{(r,s+1)}(M)) \rightarrow C^{\infty}({\Lambda}^{(r,s)}(M)), \ \ \bar{\bar{\partial}} :={1\over 2}(\delta-i\delta^c).$$

Remark that the integrability of $J$ (which is equivalent to the integrability of $J_c$) implies
$$\bar\partial\circ \bar\partial=0, \ \ \bar{\bar{\partial}}\circ \bar{\bar{\partial}}=0,$$
therefore we can consider \textit{the metallic complex cohomology groups}
$$H^{(r,s)}(M):=\ker (\bar{\partial}_{(r,s)})/Im (\bar{\partial}_{(r,s-1)}),$$
where $$\bar{\partial}_{(r,s)}:C^{\infty}({\Lambda}^{(r,s)}(M)) \rightarrow C^{\infty}({\Lambda}^{(r,s+1)}(M))$$
and \textit{the metallic complex homology groups}
$$H_{(r,s)}(M):=\ker (\bar{\bar{\partial}}_{(r,s)})/Im (\bar{\bar{\partial}}_{(r,s+1)}),$$
where $$\bar{\bar{\partial}}_{(r,s)}:C^{\infty}({\Lambda}^{(r,s)}(M)) \rightarrow C^{\infty}({\Lambda}^{(r,s-1)}(M)).$$

\bigskip

Now, if
$${T^*}^{\mathbb{C}}M={{\mathcal{D}^*}^{\mathbb{C}}} \oplus {\overline{{\mathcal{D}^*}^{\mathbb{C}}}}$$
is the decomposition of the complexified cotangent bundle defined by $J^*$, then we get the following:

\begin{proposition} Let $(M,J)$ be a metallic complex manifold such that $J^2=pJ+qI$ with $p^2+4q<0$. Then the $\bar\partial$-operator:
$$\bar\partial ={1\over {2(p^2+4q)}}[(p^2+4q)d+i(4J^*\circ d\circ J^*-2pd\circ J^*-2pJ^*\circ d+p^2d)]$$
is acting on $C^{\infty}({{\Lambda}^r}({\mathcal{D}^*})) \otimes C^{\infty}({\Lambda}^{s}({\overline{{\mathcal{D}^*}^{\mathbb{C}}}}))$.
\end{proposition}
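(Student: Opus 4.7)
The plan is to obtain the formula by purely algebraic substitution: express $J^*_c$ in terms of $J^*$, plug into $d^c=J^*_c\circ d\circ J^*_c$, and then into $\bar\partial=\tfrac12(d-id^c)$. The bidegree claim then follows from standard almost complex geometry once we identify ${\mathcal{D}^{*\mathbb{C}}}$ with the $(1,0)$-part (or its conjugate) of $T^{*\mathbb{C}}M$ for $J_c$.

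First, from the relation $J_c=-\tfrac{1}{\sqrt{-p^2-4q}}(2J-pI)$ recalled in the previous subsection, dualising gives
\[
J^*_c \;=\; -\frac{1}{\sqrt{-p^2-4q}}\bigl(2J^*-pI\bigr).
\]
Squaring the scalar factor yields $\bigl(-\tfrac{1}{\sqrt{-p^2-4q}}\bigr)^{2}=-\tfrac{1}{p^2+4q}$, so
\[
d^c \;=\; J^*_c\circ d\circ J^*_c \;=\; -\frac{1}{p^2+4q}\,(2J^*-pI)\circ d\circ(2J^*-pI).
\]
Expanding the composition, I get
\[
(2J^*-pI)\circ d\circ(2J^*-pI) \;=\; 4\,J^*\!\circ d\circ J^* - 2p\,d\circ J^* - 2p\,J^*\!\circ d + p^{2}d.
\]

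Substituting into $\bar\partial=\tfrac12(d-id^c)$ gives
\[
\bar\partial \;=\; \tfrac12 d + \frac{i}{2(p^2+4q)}\bigl[4\,J^*\!\circ d\circ J^* - 2p\,d\circ J^* - 2p\,J^*\!\circ d + p^{2}d\bigr],
\]
and pulling the common denominator $2(p^2+4q)$ through the first term (noting we are dividing by a negative number, so the formal manipulation is legitimate) yields exactly the stated expression. This is the routine part.

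For the bidegree assertion, since $J$ is integrable, the associated $J_c$ is integrable as well, so the decomposition $T^{\mathbb{C}}M=T^{(1,0)}M\oplus T^{(0,1)}M={\mathcal{D}^{\mathbb{C}'}}\oplus{\mathcal{D}^{\mathbb{C}}}$ recorded earlier dualises to ${T^*}^{\mathbb{C}}M={\mathcal{D}^{*\mathbb{C}}}\oplus\overline{\mathcal{D}^{*\mathbb{C}}}$, and $\bar\partial=\tfrac12(d-id^c)$ is, by construction, the usual Dolbeault operator associated with the complex structure $J_c$. Hence it maps $(r,s)$-forms to $(r,s+1)$-forms, which on the decomposition above means it acts on $C^\infty(\Lambda^r(\mathcal{D}^*))\otimes C^\infty(\Lambda^{s}(\overline{\mathcal{D}^{*\mathbb{C}}}))$ as claimed. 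The main obstacle (a minor one) is the careful handling of the sign in $\sqrt{-p^2-4q}$ versus $\sqrt{p^2+4q}$ and ensuring the scalar $1/(p^2+4q)$ appears with the right sign; no serious computation is required beyond this bookkeeping.
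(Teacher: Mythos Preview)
Your proof is correct and follows essentially the same route as the paper: compute $d^c$ by substituting $J^*_c=-\tfrac{1}{\sqrt{-p^2-4q}}(2J^*-pI)$ into $d^c=J^*_c\circ d\circ J^*_c$, expand, and then read off $\bar\partial=\tfrac12(d-id^c)$. If anything you are slightly more thorough, since the paper's proof stops after computing $d^c$ and says ``Then the statement,'' whereas you also justify the bidegree claim via the identification of $\mathcal{D}^{\mathbb{C}}$, $\mathcal{D}^{\mathbb{C}'}$ with the $(0,1)$- and $(1,0)$-parts for $J_c$.
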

\begin{proof} We have:
$$d^c= [-\frac{1}{\sqrt{-p^2-4q}}(2J^*-pI)]\circ d \circ [-\frac{1}{\sqrt{-p^2-4q}}(2J^*-pI)]=$$
$$=-{\frac{1}{{p^2+4q}}(4J^*\circ d\circ J^*-2pd\circ J^*-2pJ^*\circ d+p^2d)}.$$
Then the statement.
\end{proof}

Similarly, we prove that:
\begin{proposition} Let $(M,J,g)$ be a metallic Norden manifold such that $J^2=pJ+qI$ with $p^2+4q<0$. Then the $\bar{\bar{\partial}}$-operator:
$$\bar{\bar{\partial}} ={1\over {2(p^2+4q)}}[(p^2+4q)\delta+i(4J^*\circ \delta\circ J^*-2p\delta\circ J^*-2pJ^*\circ \delta+p^2\delta)]$$
is acting on $C^{\infty}({{\Lambda}^r}({\mathcal{D}^*})) \otimes C^{\infty}({\Lambda}^{s}({\overline{{\mathcal{D}^*}^{\mathbb{C}}}}))$.
\end{proposition}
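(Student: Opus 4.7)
The plan is to mirror exactly the derivation given for $\bar\partial$ in the preceding proposition, replacing the differential $d$ by the codifferential $\delta$ throughout. The starting point is the definition $\bar{\bar{\partial}} := \frac{1}{2}(\delta - i\delta^c)$, so the whole task reduces to computing $\delta^c$ explicitly in terms of $J^*$ and $\delta$, and then simplifying.

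First I would recall the identity $\delta^c = J^*_c\circ\delta\circ J^*_c$, already established earlier in the subsection as a consequence of $J^*\circ\star=\star\circ J^*$. Combined with the relation $J_c = -\frac{1}{\sqrt{-p^2-4q}}(2J-pI)$, which dualises to $J^*_c = -\frac{1}{\sqrt{-p^2-4q}}(2J^*-pI)$, a direct expansion of the composition gives
$$\delta^c \;=\; \frac{1}{-p^2-4q}\,(2J^*-pI)\circ\delta\circ(2J^*-pI) \;=\; -\frac{1}{p^2+4q}\bigl(4J^*\circ\delta\circ J^* - 2pJ^*\circ\delta - 2p\delta\circ J^* + p^2\delta\bigr).$$
Plugging this into $\bar{\bar{\partial}} = \frac{1}{2}(\delta - i\delta^c)$ and clearing the common denominator $\frac{1}{2(p^2+4q)}$ yields the stated formula word for word.

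It remains to verify the claim on the domain. Here I would argue exactly as for $\bar\partial$: the integrability of $J$ — equivalently, of $J_c$ — ensures that, with respect to the bigrading on $\Lambda^\bullet T^{*\mathbb{C}}M$ induced by the splitting $T^{*\mathbb{C}}M = {\mathcal{D}^*}^{\mathbb{C}}\oplus \overline{{\mathcal{D}^*}^{\mathbb{C}}}$, the codifferential $\delta$ decomposes as a sum of components of bidegrees $(0,-1)$ and $(-1,0)$ only. The operator $\bar{\bar{\partial}}$ is, by construction, the $(0,-1)$-component of $\delta$, and thus sends $C^{\infty}(\Lambda^r({\mathcal D}^*)) \otimes C^{\infty}(\Lambda^{s}(\overline{{\mathcal D^*}^{\mathbb C}}))$ into $C^{\infty}(\Lambda^r({\mathcal D}^*)) \otimes C^{\infty}(\Lambda^{s-1}(\overline{{\mathcal D^*}^{\mathbb C}}))$, as claimed.

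The main obstacle is essentially bookkeeping: the computation itself is routine, so the point to double-check is the sign produced by $(-\frac{1}{\sqrt{-p^2-4q}})^2 = \frac{1}{-p^2-4q} = -\frac{1}{p^2+4q}$, since an error there propagates into every one of the four terms in the expansion. Once this is handled correctly, the identity collapses to the desired one and the bidegree statement follows immediately from the integrability hypothesis already used in the paper.
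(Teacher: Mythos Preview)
Your proposal is correct and is exactly the approach intended by the paper: the paper gives no explicit proof here, only ``Similarly, we prove that,'' and your mirroring of the $\bar\partial$ computation---using $\delta^c=J^*_c\circ\delta\circ J^*_c$, substituting $J^*_c=-\frac{1}{\sqrt{-p^2-4q}}(2J^*-pI)$, expanding, and then inserting into $\bar{\bar\partial}=\frac{1}{2}(\delta-i\delta^c)$---is precisely what this means. Your attention to the sign coming from $(-\frac{1}{\sqrt{-p^2-4q}})^2=-\frac{1}{p^2+4q}$ is well placed, and your bidegree argument for the domain is more explicit than anything the paper provides.
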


\begin{remark}
The operators $d^c$ and $\bar{\partial}$ can be defined on metallic complex manifolds and $\delta^c$, $\Delta^c$ and $\bar{\bar{\partial}}$ only on metallic Norden manifolds.
\end{remark}

\small{

\textit{Adara M. Blaga}

\textit{Department of Mathematics}

\textit{West University of Timi\c{s}oara}

\textit{Bld. V. P\^{a}rvan nr. 4, 300223, Timi\c{s}oara, Rom\^{a}nia}

\textit{adarablaga@yahoo.com}

\bigskip

\textit{Antonella Nannicini}

\textit{Department of Mathematics and Informatics "U. Dini"}

\textit{University of Florence}

\textit{Viale Morgagni, 67/a, 50134, Firenze, Italy}

\textit{antonella.nannicini@unifi.it}
}

\begin{thebibliography}{99}

\bibitem{b:f} A. Bejancu, H. R. Farran, {\it Foliations and geometric structures},
Mathematics and Its Applications \textbf{580}, Springer, Dordrecht, 2006.



\bibitem{blcr} A. M. Blaga, M. C. Crasmareanu, \textit{The geometry of product conjugate connections}, An. Univ. Ioan Cuza din Iasi, seria Matematica, tom \textbf{LIX}, Fasc. 1, (2013), 73--84.

\bibitem{bn1} A. M. Blaga, A. Nannicini, \textit{On the geometry of metallic pseudo-Riemannian structures}, submitted.




\bibitem{chen3} B. Y. Chen, \textit{Mean curvature and shape operator of isometric immersions in real space forms}, Glasgow Math. J. \textbf{38}, (1996), 87--97.

\bibitem{compl} Y. Do\u{g}ru, \textit{Chen inequalities for submanifolds of some space forms endowed with a semi-symmetric non-metric connection}, Jordan J. Math. and Stat. (JJMS) \textbf{6}, (2013), no. 4, 313--339.

\bibitem{f:ip} M. Falcitelli, S. Ianu\c s, A. M. Pastore, {\it Riemannian submersions and related topics},
 World Scientific Publishing Co., Inc., River Edge, NJ, 2004.





\bibitem{m} M. Holm, \emph{New insights in brane and Kaluza-Klein theory through almost product structures}, arXiv:hep-th/9812168, 1988.




\bibitem{c} C.-E. Hre\c tcanu, M. Crasmareanu, \textit{Metallic structures on Riemannian manifolds}, Revista de la Uni\'{o}n Matem\'{a}tica Argentina \textbf{54}, (2013), no. 2, 15--27.


\bibitem{v:k} V. F. Kirichenko, \textit{Method of generalized Hermitian
geometry in the theory of almost contact manifold}, Itogi Nauki i
Tekhniki, Problems of geometry \textbf{18} (1986), 25--71; translated in J.
Soviet. Math. \textbf{42} (1988), no. 5, 1885--1919.

\bibitem{l} M.-C. Munoz-Lecanda, \textit{On some aspects of the geometry of non integrable distributions and applications}, arXiv:1808.06704.2018.





\bibitem{ozyi} M. \"{O}zkan, F. Yilmaz, \textit{Metallic structures on differentiable manifolds}, Journal of Science and Arts Year \textbf{18}, (2018), no. 3(44), 645--660.






\bibitem{quat} G. E. V\^ ilcu, \textit{B.-Y. Chen inequalities for slant submanifolds in quaternionic space forms}, Turk. J. Math.
\textbf{34}, (2010), 115--128.


\end{thebibliography}
\end{document}